\newtheorem{teo}{Theorem}
\newtheorem{prop}{Proposition}
\newtheorem{ex}{Example}
\newtheorem{cor}{Corollary}
\newtheorem{lem}{Lemma}
\newtheorem*{teorm}{Theorem}
\theoremstyle{definition}
\newtheorem{obs}{Remark}
\newtheorem{ddef}{Definition}
\newcommand{\co}{\mathbb{C}}
\newcommand{\cpt}[1]{\mathbb{C}P^{2}}
\newcommand{\pcn}[1]{\mathbb{P}^{#1}_{ \mathbb{C}}}
\newcommand{\pe}{\mathbb{P}}
\newcommand{\sing}{\mbox{Sing}}
\newcommand{\cl}[1]{\mathcal{#1}}
\newcommand{\ba}{\setminus}
\newcommand{\dr}{\mbox{$\partial$}}
\begin{document}

\setcounter{section}{0}
\setcounter{teo}{0}
\setcounter{exe}{0}

\author{Felipe Cano \& Nuria Corral \& Rog\'erio   Mol}
\title{Local polar invariants for plane singular foliations}
\dedicatory{To the memory of Marco Brunella}
\maketitle

\begin{abstract} In this survey paper, we take the viewpoint of polar invariants to the local and global study of
  non-dicritical holomorphic foliations in dimension two and their invariant curves.  It appears a characterization of second type foliations and generalized curve foliations as well as a description of the $GSV$-index in terms  of polar curves. We also interpret the proofs concerning the Poincar\'e problem with polar invariants.
\end{abstract}
\footnotetext[1]{ {\em 2000 Mathematics Subject Classification.}
Primary 32S65 ; Secondary 14C21. } \footnotetext[2]{{\em
Keywords.} Holomorphic foliation, polar curves, invariant curves.}
\footnotetext[3]{First and second authors supported by the Ministerio de Econom\'{\i}a y Competitividad MTM2013-46337-C2-1-P. Third author supported by CAPES, FAPEMIG, Pronex/FAPERJ and Universal/CNPq.}

 \medskip \medskip
\tableofcontents
\section{Introduction}

In this paper, we give a look based on the study of
intersection properties of polar curves of a foliation to three subjects concerning non-dicritical singular holomorphic foliations:
\begin{enumerate}
\item[a)] The characterization of second type foliations and generalized curves.
\item[b)] A polar interpretation of the GSV-index.
\item[c)] The (non-dicritical) Poincar\'{e} problem for foliations in ${\mathbb P}^2_{\mathbb C}$.
\end{enumerate}

The polar curves of a singular holomorphic foliation $\mathcal F$ in dimension two have been studied in a local way by P. Rouill\'e \cite{rouille1999} and N. Corral \cite{corral2003} and in a global way by R. Mol \cite{mol2010}. The definition is quite geometrical. Namely, the polar curve of $\mathcal F$ with respect to another foliation $\mathcal L$ is just the curve of tangencies between $\mathcal F$ and $\mathcal L$. It generalizes a classical approach of polar curves by E. Casas-Alvero, B. Teissier and others \cite{casas,Tei,Mer,Gar}. Usually $\mathcal L$ is a linear foliation of parallel lines (or lines passing through a point of ${\mathbb P}^2_{\mathbb C}$) and in this way the ``generic object'' is well defined with respect to certain criteria, such as the equisingularity, for instance.

Let us recall that the term
{\em generalized curve} comes from the results in \cite{camacho1984} and denotes a local foliation without saddle-nodes in its
 desingularization. Such foliations  have the outstanding properties of being desingularized by the same sequence of blow-ups that de\-sin\-gu\-la\-ri\-zes
the set of separatrices and minimizing Milnor number. The non-dicritical foliations whose reduction of singularities coincides with the reduction of singularities of the set of formal separatrices are those that minimize the
algebraic multiplicity.  This characterization is due to J.-F. Mattei and E. Salem \cite{mattei2004} and they use the terminology
{\em foliations of second type}.

The intersection number
of a generic polar curve of a non-dicritical foliation $\cl{F}$ and
a formal invariant curve $S$ at $0 \in \co^{2}$ is what we call {\em polar intersection number}  $p_0({\cl{F}},S)$. We have that
\begin{equation}
\label{eq:desigualdad}
p_0({\cl{F}},S)\leq \nu_0({\mathcal F})+\mu_0({\mathcal F}),
\end{equation}
where $\nu_0({\mathcal F})$ is the algebraic multiplicity and $\mu_0({\mathcal F})$ the Milnor number.
Moreover,  equality holds if and only if $\mathcal F$ is of second type and $S$ is its set of formal separatrices.

Take a germ of convergent invariant curve $C\subset S$  and a generalized curve $\mathcal G$ such that $C$ is its set of separatrices. Then we have
$$
p_0({\cl{G}},C) \leq p_0({\mathcal F},S)
$$
and equality holds if and only if $\mathcal F$ is a generalized curve and $S=C$ is its set of separatrices.

There is no obstruction to consider formal foliations and formal generalized curves. In this context, we also have that
for any formal generalized curve $\hat {\mathcal G}$ such that $S$ is its set of separatrices and any formal foliation $\hat {\mathcal F}$ such that  $S$ is invariant,   we have
$$
\Delta_0(\hat{\mathcal F},S)= p_0({\hat {\mathcal F}},S)-
p_0({\hat {\mathcal G}},S)_{0} \geq 0
$$
and  equality holds if and only if $\hat {\mathcal F}$ is a generalized curve and $S$ is its set of separatrices.

These results express a well known characterization of
generalized curves in terms of the G\'{o}mez-Mont-Seade-Verjovsky index. Take a convergent curve $C$ and a generalized curve $\mathcal G$ such that $C$ is its set of separatrices. Consider a non-dicritical foliation $\mathcal F$ such that $C$ is invariant by $\mathcal F$. We have that
$$
GSV_0({\mathcal F},C)= \Delta_0({\mathcal F},S)\geq 0.
$$
We recover in this way the known result that $\mathcal F$ is a generalized curve if and only $GSV_0(\cl{F},C) = 0$ (\cite{brunella1997II,lehmann2001}) and in this case $C$ is its set of separatrices. Moreover, the above formula gives a way of generalizing the GSV-index to formal invariant curves,  as well as another interpretation of the non-negativity of the GSV-index for non-dicritical foliations (see Proposition 6 of \cite{brunella1997II}).

Let $\cl{F}$ be a holomorphic foliation on $\pcn{2}$.  The degree of $\cl{F}$ is the number $\deg(\cl{F})$ of
tangencies  between $\cl{F}$ and a generic line
$L \subset \pcn{2}$.
The question concerning the existence of a bound for the degree of
an  algebraic curve $S$ invariant by  $\cl{F}$ in  terms of the
degree of $\cl{F}$ is known in Foliation Theory as the {\em
Poincar\'e problem}, being proposed by Poincar\'e himself in  \cite{poincare1891} as a step in
finding a rational first integral for a polynomial differential equation in two complex variables. A first answer to
this problem was given
 by D. Cerveau and A. Lins Neto \cite{cerveau1991}: if $S$
has at most nodal singularities, then  $\deg(S) \leq \deg(\cl{F}) +
2$. Besides,     this bound is reached if and only if
$\cl{F}$ is a logarithmic foliation, that is, a foliation induced by a closed meromorphic 1-form
with simple poles. Later, M. Carnicer obtained in
\cite{carnicer1994} the same inequality, removing the hypothesis on
the singularities of $S$, but  admitting that the singularities of
$\cl{F}$   over $S$  are all non-dicritical, meaning that the number of local separatrices
is finite.  The Poincar\'e problem was put in
a clearer setting by M. Brunella in the works \cite{brunella1997I}
and \cite{brunella1997II}.  Brunella has shown that the bound  $\deg(S) \leq
\deg(\cl{F}) + 2$ occurs whenever the sum over $S$ of the $GSV$-indices of $\cl{F}$ with respect to the local separatrices contained
in $S$ is non-negative. This happens in the two cases
mentioned above. In the dicritical case, N. Corral and P. Fern\'andez-S\'anchez \cite{Corral-Fdez2006}
proved that the degree of an algebraic invariant curve $S$ of $\mathcal F$ is bounded in terms of the degree of $\mathcal F$ provided that
the germ of $S$  at each singular point of $\mathcal F$ is a union of isolated separatrices of $\mathcal F$.

The last section is devoted to give a proof of Carnicer's bound in terms of local and global polar curves. We end the paper by a remark on logarithmic foliations, that corresponds to the limit case.

\section{Recall on local invariants and reduction of singularities}
We recall \cite{Cano-Cerveau} that a germ of singular holomorphic foliation $\cl{F}$ in $(\co^2,0)$ is  defined by $\omega=0$, where $\omega$ is a 1-form
 \begin{equation}
 \label{eq:uno}
 \omega = P(x,y)
dx + Q(x,y) dy
 \end{equation}or by the vector field ${\bf v} = - Q(x,y) \dr / \dr x + P(x,y) \dr / \dr y$, where $P,Q \in {\mathbb C}\{x,y\}$ are relatively prime. The origin $0$ is a {\em singular point} if $P(0,0)=Q(0,0)=0$. Note that any nonzero 1-form defines in a unique way a germ of singular holomorphic foliation, just by taking the common factor of the coefficients; same remark when we consider a vector field.

  A formal curve $C$ in $({\mathbb C}^2,0)$ is given by a reduced equation $f(x,y)=0$ with $f\in {\mathbb C}[[x,y]]$. When $f$ is irreducible, we say that $C$ is a {\em branch}. If
 $
 f=f_1f_2\cdots f_e
 $
 is the decomposition of $f$ as a product of irreducible factors, we say that each $B_i=(f_i=0)$ is one of the {\em branches of $C$}. If we can take a convergent equation $f\in {\mathbb C}\{x,y\}$, then $C$ is a germ of analytic curve, in this case, the branches are also analytic germs of curve.

 Let $\sigma: (M,\sigma^{-1}(0)) \to
(\co^2,0)$ be the blow-up of the origin.  Given a branch $B$ with $f(x,y)=0$, we know that $f$ has the form
$$
f(x,y)= (\lambda x+\mu y)^r+\tilde f(x,y), \ \ (\lambda,\mu)\ne (0,0),
$$
where all the terms of $\tilde f(x,y)$ have degree greater than $r$. We say that $r$ is the multiplicity of $B$ at the origin and we put $r=\nu_0(B)$. The line $\lambda x+\mu y=0$ is the {\em tangent cone} of $B$ at the origin and it determines a single point $\tau(B)$ in the projective line $\sigma^{-1}(0)$. Up to a linear coordinate change, we can assume that $f=y^r+\tilde f(x,y)$ and in this case, we have a branch $B'$ at $\tau(B)$ given by $f'(x',y')=0$, where
$$
x'^rf'(x',y')= f(x',x'y').
$$
 We say that $B'$ is the {\em strict transform of $B$}. By taking the union of branches, we define the strict transform of any formal curve $C$ and, by iterating the procedure, we can define the strict transform of $C$ under any sequence
 $$
 \pi: ({M},E)\rightarrow ({\mathbb C}^2,0)
 $$
 of punctual blow-ups. Note that each branch $B$ of $C$ gives a point $\tau_E(B)\in E$ in the exceptional divisor $E$.  We say that the sequence $\pi$ is a {\em reduction of singularities} of $C$ if and only if
 \begin{enumerate}
 \item For any two branches $B_1,B_2$ of $C$ we have $\tau_E(B_1)\ne \tau_E(B_2)$.
 \item For any branch $B$ of $C$, the exceptional divisor $E$ has only one irreducible component $D$ through $p=\tau_E(B)$ and the strict transform $B'$ of $B$ is non singular at $p$ and transversal to $E$. In other words, there are formal coordinates $(x,y)$ at $p$ such that $E=(x=0)$ and $B'=(y=0)$.
 \end{enumerate}
 It is known \cite{Zariski} that any formal curve has a reduction of singularities. Moreover, there is a minimal one, and any other factorizes through the minimal one by making additional punctual blow-ups. Let us note that doing a reduction of singularities needs at least one blow-up, even in the case that we have a non-singular branch.

The {\em transform} $\pi^{*} \cl{F}$ of a foliation $\cl{F}$ by $\pi$ is locally given by $\pi^*\omega$. Note that this definition is also valid for a {\em formal foliation}, that is, for a foliation given by $\hat\omega=0$, where the coefficients of $\hat \omega$ are formal series without common factor.

Take a singular holomorphic foliation $\mathcal G$ in $(M,E)$ and a point $p\in E$. We recall \cite{Cano-Cerveau} that $p$ is a {\em simple point} for ${\mathcal G},E$ if there are local coordinates $(x,y)$ centered at $p$ such that $E\subset (xy=0)$ locally at $p$ and one of the following properties holds
\begin{enumerate}
\item $\mathcal G$ is locally given by $dx=0$. This is the case when $p$ is non singular. If $(x=0)\subset E$, we say that $p$ is of {\em corner type} and otherwise $p$ is of {\em trace type}.
\item {\em Corner type singular points}: $\mathcal G$ is locally given by $\omega=xy\eta=0$ and the divisor is $E=(xy=0)$,  with
$$
\eta={a(x,y)}\frac{dx}{x}+b(x,y)\frac{dy}{y},
$$
where $(a(0,0),b(0,0))=(-\mu,\lambda)$, with $\mu\ne 0$ and $\lambda/\mu\notin {\mathbb Q}_{>0}$. We have two possibilities
\begin{enumerate}
\item $\lambda\ne 0$. This is a {\em complex hyperbolic singularity of corner type}, following the terminology of \cite{Can-R-S}.
\item $\lambda=0$.  This is a {\em badly oriented saddle-node of corner type}.
\end{enumerate}
\item {\em Trace type singular points}: $\mathcal G$ is locally given by $\omega=x\eta=0$ and the divisor is $E=(x=0)$,  with
$$
\eta={a(x,y)}\frac{dx}{x}+b(x,y){dy},
$$
where $a(x,y)=-\mu y +\alpha x+\tilde a(x,y)$, with $\tilde a(x,y)$ having all terms of degree at least two
and $b(0,0)=\lambda$. In addition, we ask that one of the following situations holds
\begin{enumerate}
\item $\lambda\mu\ne 0$ and $\lambda/\mu\notin {\mathbb Q}_{>0}$. This is a {\em complex hyperbolic trace singularity}.
\item $\mu\ne 0$ and $\lambda=0$. This is a {\em well oriented saddle node of trace type}.
\item $\mu = 0$ and $\lambda\ne 0$. This is a {\em badly oriented saddle node of trace type}.
\end{enumerate}
\end{enumerate}
{ In \cite{mattei2004},   badly oriented saddle nodes are called tangent saddle-nodes.} An irreducible component $D$ of $E$ may be invariant or {\em dicritical}. If we are dealing with a simple point $p\in M$, a dicritical component $D$ only exists when $p$ is non singular and in this case, up to choosing local coordinates $(x,y)$, we have that ${\mathcal G}$ is given by $dx=0$ and $D=(y=0)$.

As a direct consequence of
Seidenberg's
Desingularization Theorem \cite{Sei,Cano-Cerveau} there is a morphism  $\pi$ as above for which
 $\pi^*{\cl{F}}$ has only simple singularities. Such $\pi$ is called a {\em reduction of the singularities of} $\mathcal F$. Note that there is a minimal such $\pi$ and any other reduction of singularities of $\mathcal F$ factorizes through the minimal one by an additional sequence of blow-ups.

A {\em separatrix} for $\cl{F}$ in $(\co^2,0)$ is an invariant formal irreducible curve. Thus, it is given by an equation $f(x,y)=0$, where $f\in {\mathbb C}[[x,y]]$ is an irreducible formal series such that $f(0,0)=0$ and there is a formal series
$h\in {\mathbb C}[[x,y]]$ such that
$$
\omega\wedge df=(fh) dx\wedge dy.
$$
If we can take  $f\in {\mathbb C}\{x,y\}$,  the separatrix is convergent. We denote $\mbox{\rm Sep}({\mathcal F})$ the set of separatrices of $\mathcal F$.

Consider a reduction of singularities $\pi: (M,E) \to (\co^2,0)$ of $\mathcal F$. The following are equivalent
\begin{enumerate}
\item There is a dicritical component $D$ of $E$ for $\pi^*{\mathcal F}$.
\item The foliation $\mathcal F$ has infinitely many separatrices.
\end{enumerate}
Such foliations are called {\em dicritical}. In this paper we deal with non-dicritical foliations, that is foliations having only finitely many separatrices.  In this case, the union $S_{\mathcal F}=\bigcup\{B; B\in {\mbox{\rm Sep}}({\mathcal F})\}$ is a formal curve whose irreducible components are the separatrices.

The separatrices of a non-dicritical $\mathcal F$ are in one to one correspondence with the singular points of trace type in the reduction of singularities. To any separatrix $B$ we associate the singular trace point { $\tau_E(B)\in E$} through which it passes the strict transform of $B$.  The separatrix $B$ is called {\em strong} or of {\em Briot and Bouquet} type (see \cite{Lorena}, where this terminology is used) if and only if  $\tau_E (B)$ is either a complex hyperbolic singularity or a badly oriented saddle node. Such separatrices are convergent by application of the classical Briot and Bouquet Theorem. Note that in the classical Camacho-Sad paper \cite{CamachoSad} the authors show the existence of a Briot and Bouquet separatrix in order to prove that any non-dicritical foliation has at least one convergent separatrix.

\begin{ddef} Let $\mathcal F$ be a non-dicritical foliation in $({\mathbb C}^2,0)$ and consider the minimal desingularization $\pi: (M,E) \to (\co^2,0)$ of $\mathcal F$.
\begin{enumerate}
\item [(a)] The foliation $\mathcal F$ is a {\em generalized curve} (also {\em complex hyperbolic}) if all the singularities of $\pi^*{\cl{F}}$ are of complex hyperbolic type.
\item [(b)]  The foliation $\mathcal F$ is  of  {\em second type} if all the saddle nodes of $\pi^*{\cl{F}}$ are well oriented with respect to $E$.
\end{enumerate}
\end{ddef}
The terminology comes from previous papers \cite{camacho1984,mattei2004,Can-R-S}. Note that $\mathcal F$ is of second type if and only if  the strong separatrices  correspond to complex hyperbolic trace points and all the corners are also complex hyperbolic. The fact of being a generalized curve or  a second type foliation is independent of considering another, may be not minimal, reduction of singularities.

\subsection {Local invariants}
Let us recall now some of the local invariants frequently used in the local study of singular foliations in dimension two, see also \cite{Cano-Cerveau}.

The {\em algebraic multiplicity} $\nu_{0}(\cl{F})$ is the minimum of the orders $\nu_0(P)$, $\nu_0(Q)$ at the origin of the coefficients of a local generator of $\mathcal F$.  The {\em Milnor number} $\mu_0(\mathcal F)$ is given by
$$
\mu_0(\mathcal F)=\dim_{\mathbb C}\frac{{\mathbb C}[[x,y]]}{(P,Q)}=i_0(P,Q)
$$
(where $i_0(P,Q)$ stands for the intersection multiplicity).

Take a primitive parametrization $\gamma: ({\mathbb C},0)\rightarrow ({\mathbb C}^2,0)$, $\gamma(t)=(x(t),y(t))$, of a formal irreducible curve $B=(f(x,y)=0)$ at $({\mathbb C}^2,0)$. Then $B$ is a separatrix of $\mathcal F$ if and only if $\gamma^*\omega=0$. In this case, we can consider the {\em Milnor number $\mu_0({\mathcal F},B)$  of $\mathcal F$ along $B$} defined by
\[ \mu_0(\cl{F},B) = {\rm ord}_{t} {\bf w}(t) ,\]
where ${\bf w}(t)$ is the unique vector field at $(\co,0)$ such that
$\gamma_{*} {\bf w}(t) = {\bf v} \circ \gamma(t)$ (this number is also called multiplicity of $\mathcal F$ along $B$, see \cite{camacho1984,Cam-C-S}). We have that
\begin{equation}
\label{multiplicidaderelativa1}
 \mu_p(\cl{F},B) =
\begin{cases}
 {\rm ord}_{t}(Q(\gamma(t))) -  {\rm ord}_{t}(x(t)) + 1 \ \ \ \mbox{if $x(t) \neq 0$}   \\
 {\rm ord}_{t}(P(\gamma(t))) -  {\rm ord}_{t}(y(t)) + 1 \ \ \ \mbox{if $y(t) \neq 0$}
\end{cases}
\end{equation}
 If $B$ is not a separatrix, we define {\em the tangency order $\tau_0({\mathcal F},B)$} to be the ${\rm ord}_{t}a(t)$ where ${\gamma^* w}=a(t)dt$.

These invariants have a behavior under blow-up that helps in many of the results we are concerned here. For instance, if $B$ is not a separatrix and we consider the blow-up of the origin  $\sigma: (M,\sigma^{-1}(0))\rightarrow ({\mathbb C}^2,0)$, we have
\begin{equation}
\label{eq:tangenciyorder}
\tau_0({\mathcal F},B)= \nu_0(B)\nu_0({\mathcal F})+ \tau_{p'}(\sigma^*{\mathcal F},B'),
\end{equation}
where $B'$ is the strict transform of $B$ by $\sigma$ and $\{p'\}=B'\cap \sigma^{-1}(0)$.

\begin{ex}
\label{saddle}
{\rm
For a   complex hyperbolic singularity with two transversal separatrices $B_1$ and $B_2$, we have $\mu_0(\cl{F},B_1) = \mu_0(\cl{F},B_2) = 1$.
For a  saddle-node, up to reordering,  we have $\mu_0({\mathcal F,B_1})=1$, $\mu_0({\mathcal F,B_2})=k+1$ where $B_1$ is the {\em strong separatrix} and $k\geq 1$ is the {\em Poincar\'{e} order} of the saddle node. }
\end{ex}

\subsection{Comparison with the hamiltonian foliation}
Let $S$ be a formal curve and $f=0$ be a reduced equation for $S$. We denote ${\mathcal G}_f$ the ``hamiltonian'' foliation defined by $df=0$. Then ${\mathcal G}_f$ is non-dicritical and $S$ is its curve of separatrices.  Note that
$$
\nu_0({\mathcal G}_f)=\nu_0(S)-1
$$
and $\mu_0({\mathcal G}_f)$ corresponds exactly to the usual definition of the Milnor number of $S$ at the origin.
We will frequently compare invariants of a non-dicritical $\mathcal F$ and ${\mathcal G}_f$, where $f=0$ is a reduced equation of  $S_{\mathcal F}$.

Non-dicritical second type foliations minimize the algebraic multiplicity and have the same reduction of singularities as $S_{\mathcal F}$. Moreover, generalized curves also minimize the Milnor number. Let us state these results as Mattei-Salem in \cite{mattei2004}

 \begin{teo}[\cite{camacho1984, mattei2004}]
  \label{teo:matteisalem}
  Let $\mathcal F$ be a non-dicritical foliation and consider ${\mathcal G}_f$ where $f=0$ is a reduced equation of $S_{\mathcal F}$. Take the minimal reduction of singularities $\pi:(M,E)\rightarrow ({\mathbb C}^2,0)$ of $\mathcal F$. Then
 \begin{enumerate}
 \item $\pi$ is a reduction of singularities of $S_{\mathcal F}$. Moreover $\pi$ is the minimal reduction of singularities of $S_{\mathcal F}$ if and only if ${\mathcal F}$ is of second type.
 \item $\nu_0({\mathcal F})\geq \nu_0({\mathcal G}_f)$. Equality holds if and only if $\mathcal F$ is of  second type.
 \item $\mu_0({\mathcal F})\geq \mu_0({\mathcal G}_f)$. Equality holds if and only if $\mathcal F$ is a generalized curve.
 \end{enumerate}

 \end{teo}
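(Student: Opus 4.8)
The plan is to prove the three items simultaneously, by induction on the number $N$ of point blow-ups in the minimal reduction of singularities $\pi\colon(M,E)\to(\co^{2},0)$ of $\cl{F}$, relating each invariant at the origin to its values at the infinitely near points produced by the first blow-up $\sigma\colon(M_{1},D)\to(\co^{2},0)$. Two elementary facts are used throughout: since $\cl{F}$ is non-dicritical the divisor $D$ is invariant by $\sigma^{*}\cl{F}$ (otherwise $\sigma^{*}\cl{F}$, hence $\cl{F}$, would have infinitely many separatrices), and the separatrices of $\cl{F}$ are in bijection with the trace-type points of $\pi^{*}\cl{F}$, a branch $B$ of $S_{\cl{F}}$ being attached to the trace point $\tau_{E}(B)\in E$ through which its strict transform passes. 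The analytic backbone is the behaviour of the Milnor number under a non-dicritical blow-up,
\[
\mu_{0}(\cl{F})=\nu_{0}(\cl{F})\bigl(\nu_{0}(\cl{F})-1\bigr)-1+\sum_{p\in D}\mu_{p}(\sigma^{*}\cl{F}),
\]
which holds verbatim for $\cl{G}_{f}$ as well since $D$ is also invariant by $\sigma^{*}\cl{G}_{f}$, together with the classical formula $\mu_{0}(S)=\nu_{0}(S)(\nu_{0}(S)-1)+1+\sum_{p}(\mu_{p}(\widetilde{S})-1)$ for a reduced curve, which comes from Milnor's identity $\mu=2\delta-r+1$ and the additivity of $\delta$ under blow-up. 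It is convenient to prove (2) and (3) for the slightly more general pair consisting of $\cl{F}$ and an arbitrary generalized curve $\cl{G}$ with $\mathrm{Sep}(\cl{G})=\mathrm{Sep}(\cl{F})$; applied to $\cl{G}=\cl{G}_{f}$ this recovers the statements, since $\mu_{0}(\cl{G}_{f})=\mu_{0}(S_{\cl{F}})$ and $\nu_{0}(\cl{G}_{f})=\nu_{0}(S_{\cl{F}})-1$.

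\emph{Item (1).} That $\pi$ is a reduction of singularities of $S_{\cl{F}}$ is read off the classification of simple points: at a complex hyperbolic trace point, a well oriented saddle node and a badly oriented saddle node alike, the unique local separatrix not contained in $E$ is smooth and transverse to $E$ — respectively the transverse eigen-separatrix, the weak separatrix, and the strong separatrix. As distinct branches of $S_{\cl{F}}$ reach distinct trace points, each lying on a single component of $E$, the two conditions defining a reduction of singularities of a curve are fulfilled by the strict transform of $S_{\cl{F}}$. For the minimality clause I would compare, component by component, the divisor produced by $\pi$ with the one produced by the minimal reduction of $S_{\cl{F}}$; inspection of the simple points together with the effect of one further blow-up on a smooth transverse branch shows that a component of $E$ superfluous for $S_{\cl{F}}$ can only appear at, or in the corner case emanating from, a badly oriented saddle node of $\pi^{*}\cl{F}$. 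Hence the two divisors coincide precisely when $\pi^{*}\cl{F}$ has no badly oriented saddle node, that is, when $\cl{F}$ is of second type; this is the analysis of \cite{mattei2004}.

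\emph{Items (2) and (3).} Write $\omega=\omega_{m}+\omega_{m+1}+\cdots$ with $m=\nu_{0}(\cl{F})$ and $\omega_{m}=P_{m}\,dx+Q_{m}\,dy\neq 0$. Non-dicriticalness of $\sigma$ amounts to the homogeneous polynomial $C_{\cl{F}}:=xP_{m}+yQ_{m}$, of degree $m+1$, being not identically zero; its zeros on $D$ locate the singular points of $\sigma^{*}\cl{F}$. Since the strict transform of each branch of $S_{\cl{F}}$ meets $D$ at one of those points, the tangent cone of $S_{\cl{F}}$ divides $C_{\cl{F}}$, whence $\nu_{0}(S_{\cl{F}})=\deg(\text{tangent cone})\le m+1$, i.e.\ $\nu_{0}(\cl{F})\ge\nu_{0}(S_{\cl{F}})-1=\nu_{0}(\cl{G}_{f})$, which is the inequality of (2). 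For (3), at each infinitely near point $p$ the germ $(\sigma^{*}\cl{F},p)$ is non-dicritical, its separatrix set is $D$ together with the strict transform of $S_{\cl{F}}$, and this is also the separatrix set of $(\sigma^{*}\cl{G}_{f},p)$, which remains a generalized curve. The inductive hypothesis at $p$ thus gives $\mu_{p}(\sigma^{*}\cl{F})\ge\mu_{p}(\sigma^{*}\cl{G}_{f})$; substituting this, for every $p$, into the two instances (for $\cl{F}$ and for $\cl{G}_{f}$) of the displayed Milnor blow-up formula, and using $\nu_{0}(\cl{F})\ge\nu_{0}(\cl{G}_{f})$ together with the monotonicity of $t\mapsto t(t-1)$ on the non-negative integers, yields $\mu_{0}(\cl{F})\ge\mu_{0}(\cl{G}_{f})=\mu_{0}(S_{\cl{F}})$.

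\emph{Equality cases and the main obstacle.} The step I expect to be genuinely delicate is the analysis of equality, where crude estimates are lossy and one must decompose the defects precisely. For (2), equality forces $C_{\cl{F}}$ to coincide up to a constant with the tangent cone of $S_{\cl{F}}$ and this to persist under blow-up; tracking it shows that the defect $\nu_{0}(\cl{F})-\nu_{0}(S_{\cl{F}})+1$ is a sum of non-negative local contributions carried exactly by the badly oriented saddle nodes of $\pi^{*}\cl{F}$, consistently with the minimality clause of (1). For (3), chasing the inequalities back through the two blow-up formulas expresses $\mu_{0}(\cl{F})-\mu_{0}(S_{\cl{F}})$ as a sum of non-negative local contributions, and a direct look at a single simple point shows that a complex hyperbolic point contributes zero while a saddle node, well oriented or not, contributes its Poincar\'e order (cf.\ Example \ref{saddle}); hence equality holds precisely when $\pi^{*}\cl{F}$ has no saddle node at all, that is, when $\cl{F}$ is a generalized curve. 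One must still dispose of the degenerate base cases of the induction — where a single blow-up already yields only simple points — and verify, by an argument independent of (3) (e.g.\ that logarithmic foliations with reduced polar divisor are generalized curves), that the comparison foliation $\cl{G}_{f}$ is indeed a generalized curve, so as not to argue in a circle. These points, together with the defect computations, constitute the technical heart of \cite{camacho1984} and \cite{mattei2004}.
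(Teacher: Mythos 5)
First, a point of order: the paper contains no proof of this statement. Theorem \ref{teo:matteisalem} is quoted from \cite{camacho1984} and \cite{mattei2004} and used as a black box (for instance in the proof of Corollary \ref{cor:secondtypetangencyorder}), so there is no internal argument to compare yours against. Judged on its own merits, your proposal reproduces the correct strategy of those references: induction along the reduction of singularities, the blow-up formula $\mu_0(\cl{F})=\nu_0(\cl{F})(\nu_0(\cl{F})-1)-1+\sum_{p}\mu_p(\sigma^*\cl{F})$ for a non-dicritical blow-up, its curve analogue obtained from $\mu=2\delta-r+1$ and the additivity of $\delta$, and the classification of simple points to read off item (1) and the local defects (a saddle node of Poincar\'e order $k$ contributes $k$, consistent with Example \ref{saddle}; a complex hyperbolic point contributes $0$). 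All the formulas you quote are correct.

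However, as written the argument has gaps exactly at the points that carry the content. (i) Your proof of the inequality in (2) rests on the assertion that the tangent cone of $S_{\cl F}$ divides $xP_m+yQ_m$. Set-theoretic containment of the tangent directions among the zeros of $xP_m+yQ_m$ on the exceptional line is immediate, but the divisibility \emph{with multiplicities} is essentially equivalent to the inequality being proved and itself requires the induction: one must bound $\nu_p(\widetilde{S}_{\cl F})$ at each infinitely near point $p$ by $\nu_p(\sigma^*\cl{F})+1$ and compare with the vanishing order of $xP_m+yQ_m$ along $D$ at $p$. As stated, this step is circular. (ii) The minimality clause of (1) and both equality analyses are described as programmes rather than executed; in particular, deducing ``second type'' from $\nu$-equality requires propagating the equality to every infinitely near point and ruling out badly oriented saddle nodes at corners as well as at trace points, which your decomposition of the defect does not yet establish. (iii) You correctly flag that one must know $\cl{G}_f$ (and its transforms) to be a generalized curve by an argument independent of (3) to avoid circularity, but you do not supply that argument. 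The skeleton is right; a complete proof must fill (i)--(iii), which is precisely the technical work done in \cite{camacho1984} and \cite{mattei2004}.
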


Next corollary is a more general  version of a  result in
\cite{rouille1999}.

\begin{cor}
\label{cor:secondtypetangencyorder}
Consider a non-dicritical foliation ${\mathcal F}$ and take a branch $B$ which is
 not a separatrix.  Then
 $$
 i_0(S_{\mathcal F},B)\leq \tau_0 ({\mathcal F},B)+1
 $$
and equality holds if and only if ${\mathcal F}$ is of second type.
\end{cor}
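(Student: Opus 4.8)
The plan is to restate the inequality as the non-negativity of
$$ \Phi_0(\cl{F},B) := \tau_0(\cl{F},B)+1-i_0(S_{\cl{F}},B), $$
and to prove by induction on the number $N$ of blow-ups of the minimal reduction of singularities of $\cl{F}$ that $\Phi_0(\cl{F},B)\ge 0$, with equality if and only if $\cl{F}$ is of second type. It is convenient to prove the statement in the slightly more general form of a non-dicritical germ $\cl{F}$ at a point $q$ of a normal crossings invariant divisor $E$, where $S_{\cl{F}}$ is understood to contain the components of $E$ through $q$, $B$ is a branch through $q$ that is neither invariant nor contained in $E$, and ``second type'' is meant relative to $E$; this keeps the inductive step inside the same class of objects.

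For the base case ($N=0$, so that $q$ is a simple point for $\cl{F},E$) I would check, running through the normal forms of simple points (regular points, complex hyperbolic corner and trace singularities, well and badly oriented saddle-nodes), that a branch $B$ as above satisfies $\tau_q(\cl{F},B)=i_q(S_{\cl{F}},B)-1$, i.e.\ $\Phi_q(\cl{F},B)=0$. For a complex hyperbolic point with separatrices $(x=0)$, $(y=0)$ and a parametrization $(x(t),y(t))$ of $B$, this is exactly the place where the resonance condition $\lambda/\mu\notin\qe_{>0}$ prevents the cancellation of the leading term $(\lambda a-b)x_ay_b\,t^{a+b-1}$ of the tangency form, with $a=\mathrm{ord}_t x(t)$, $b=\mathrm{ord}_t y(t)$; the saddle-node cases are handled by the analogous computation. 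Note that a simple point with $E=\emptyset$ is of second type, so the base case is consistent with the claimed equality criterion.

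For the inductive step I would blow up $q$, obtaining $\sigma\colon (M,D)\to(\co^2,q)$, call $p'=\tau(B)$ the point of $D$ through which the strict transform $B'$ of $B$ passes, and set $\cl{F}'=\sigma^*\cl{F}$. Using formula \eqref{eq:tangenciyorder} for $\tau$, the blow-up formula for intersection multiplicities, the fact that by the non-dicritical hypothesis $D$ is invariant and the separatrices of $\cl{F}'$ at $p'$ are exactly $D$ together with the strict transforms of the separatrices of $\cl{F}$ through $p'$, and the elementary identity $i_{p'}(D,B')=\nu_q(B)$ (valid once coordinates are chosen so that the tangent cone of $B$ is a coordinate axis), one is led to the recursion
$$ \Phi_q(\cl{F},B) = \Phi_{p'}(\cl{F}',B') + \nu_q(B)\,\varepsilon_q(\cl{F}), \qquad \varepsilon_q(\cl{F}):=\nu_q(\cl{F})-\nu_q(S_{\cl{F}})+1. $$
Comparing $\cl{F}$ with $\cl{G}_f$, where $f=0$ is a reduced equation of $S_{\cl{F}}$ and $\nu_q(\cl{G}_f)=\nu_q(S_{\cl{F}})-1$, Theorem \ref{teo:matteisalem}(2) (in its version for germs on a divisor) gives $\varepsilon_q(\cl{F})\ge 0$, with equality precisely when $\cl{F}$ is of second type. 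Since $\cl{F}'$ is desingularized at $p'$ in at most $N-1$ blow-ups and $B'$ is again neither invariant nor contained in the divisor, the induction hypothesis applies to $(\cl{F}',B')$; iterating the recursion along the foliation-independent sequence of infinitely near points $q=q_0,q_1,\dots,q_N$ of $B$ and using $\Phi_{q_N}(\cl{F}_N,B_N)=0$ from the base case, one gets $\Phi_0(\cl{F},B)=\sum_{j}\nu_{q_j}(B)\,\varepsilon_{q_j}(\cl{F}_j)\ge 0$. As $\nu_{q_j}(B)\ge 1$, equality forces every $\varepsilon_{q_j}(\cl{F}_j)$ to vanish, in particular $\varepsilon_0(\cl{F})=0$, i.e.\ $\cl{F}$ is of second type; conversely second type is inherited by $\cl{F}'$ at every point of $D$, since the reduction of $\cl{F}$ is $\sigma$ followed by the reductions of $\cl{F}'$ with unchanged saddle-node orientations, so all $\varepsilon_{q_j}(\cl{F}_j)$ vanish and $\Phi_0(\cl{F},B)=0$.

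The step I expect to be most delicate is not the recursion itself but the honest bookkeeping on the exceptional divisor: one must track $S_{\cl{F}}$ and the second-type condition relative to the \emph{growing} divisor, which requires the divisorial form of the Mattei--Salem inequality $\nu_q(\cl{F})\ge\nu_q(S_{\cl{F}})-1$; and the base case must be verified for every type of simple singularity, including both orientations of the saddle-nodes, so as to be certain that the vanishing of $\Phi_0$ is detected only through the first, divisor-free, multiplicity comparison $\nu_0(\cl{F})=\nu_0(S_{\cl{F}})-1$.
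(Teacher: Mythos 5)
Your argument is, in substance, the paper's own proof: the same induction along the infinitely near points of $B$, with Noether's formula and Equation~\eqref{eq:tangenciyorder} producing your recursion $\Phi_q=\Phi_{p'}+\nu_q(B)\,\varepsilon_q$ (the paper writes it as a chain of inequalities rather than a telescoping sum), and with Theorem~\ref{teo:matteisalem}(2) supplying $\varepsilon_q\ge 0$. The only structural difference is the base case: the paper keeps blowing up until $\cl{F}$ is \emph{regular} at the point under consideration and transverse to $B$, so no case analysis of simple singularities is needed.

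That said, one point of your formulation needs repair. The inductive statement you announce --- $\Phi_q=0$ if and only if $\cl{F}$ is of second type \emph{relative to} $E$ --- is false, and it fails precisely at your base case: at a badly oriented saddle-node of trace type (coordinates with $E=(x=0)$, strong separatrix $(y=0)$, $\omega=A\,y\,dx+B\,x\,dy$ with $A(0,0)=0$, $B(0,0)\ne 0$) one computes $\tau_q(\cl{F},B)=\mathrm{ord}_t x(t)+\mathrm{ord}_t y(t)-1=i_q(S_{\cl{F}},B)-1$, so $\Phi_q=0$, while the point is by definition not of second type relative to $E$. Indeed $\Phi_q$ does not see $E$ at all, whereas relative second type does, so no such equivalence can hold at intermediate points; for the same reason, the ``divisorial version'' of Theorem~\ref{teo:matteisalem}(2) that you invoke is only true in the directions you actually use ($\varepsilon_q\ge 0$ always, and relative second type implies $\varepsilon_q=0$), not in the converse direction. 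Your third paragraph in fact routes around the problem correctly: you read the equality criterion off the identity $\Phi_0=\sum_j\nu_{q_j}(B)\,\varepsilon_{q_j}$, using that $\varepsilon_{q_0}=0$ forces second type by the genuine, divisor-free Theorem~\ref{teo:matteisalem}(2) at the origin, and that second type at the origin forces $\varepsilon_{q_j}=0$ at every infinitely near point. So the induction should be stated only as ``$\Phi_q\ge 0$, with $\Phi_q$ equal to the weighted sum of the multiplicity excesses above $q$,'' and the characterization of equality extracted solely at the divisor-free origin; with that restatement (or with the paper's regular-and-transverse base case, which sidesteps the issue) your proof is complete.
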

\begin{proof} We make induction on the number $n$ of blow-ups needed to obtain the situation that $\mathcal F$ is non singular at the point $p\in B$ we are considering, the branch $B$ is non singular and transversal to $\mathcal F$. Note that this number $n$ exists since $B$ is not a separatrix. If $n=0$ we are done since we can assume  that $\mathcal F$ is given by $dx=0$ and $B=(y=0)$. Let $\sigma: (M,\sigma^{-1}(0))\rightarrow ({\mathbb C}^2,0)$ be the blow-up of the origin, where the exceptional divisor is the projective line  $D=\sigma^{-1}(0)$. Denote by $S_{\mathcal F}',B'$ the respective strict transforms of $S_{\mathcal F},B$ and let $p'$ be the point $D\cap B'$. By Noether's formula, we have
\begin{equation}
i_0(S_{\mathcal F},B)=\nu_0(S_{\mathcal F})\nu_0(B)+i_{p'}(S_{\mathcal F}',B').
\end{equation}
In view of Theorem \ref{teo:matteisalem} we have $\nu_0(S_{\mathcal F})\leq \nu_0({\mathcal F})+1$ and  equality holds if and only $\mathcal F$ is of second type. By the induction hypothesis and recalling that the separatrices of $\sigma^*{\mathcal F}$ at $p'$ are given by $S_{\mathcal F}'\cup D$, we have that
$$
i_{p'}(S_{\mathcal F}',B')+\nu_{0}(B)=
i_{p'}(S_{\mathcal F}',B')+i_{p'}(D,B')=i_{p'}(S_{\mathcal F}'\cup D,B')\leq  \tau_{p'} (\sigma^*{\mathcal F},B')+1
$$
where equality holds if and only if $\sigma^*{\mathcal F}$ is of second type at $p'$. Looking at Equation~\eqref{eq:tangenciyorder} we conclude that
\begin{eqnarray*}
i_0(S_{\mathcal F},B)&=&\nu_0(S_{\mathcal F})\nu_0(B)+i_{p'}(S_{\mathcal F}',B')\leq \\
&\leq& (\nu_0({\mathcal F})+1)\nu_0(B)+ \tau_{p'} (\sigma^*{\mathcal F},B')+1 - \nu_0(B)=\\
&=& \tau_{0} ({\mathcal F},B)+1
\end{eqnarray*}
and equality holds if and only if $\mathcal F$ (and hence $\sigma^*{\mathcal F}$) is of second type.
\end{proof}


\section{Polar intersection numbers}
Let $\cl{{F}}$ be a germ of singular foliation in $(\co^2,0)$  given by $\omega = P dx + Q dy$, where $P,Q$ are without common factors. The {\em polar
curve $P_{(a:b)}^\cl{F}$} of $\cl{F}$ with respect to $(a:b) \in \pcn{1}$ is
defined by the equation $a P + b Q = 0$.
In terms of differential forms, it is given by
$
\omega\wedge (bdx-ady)=0
$.
Note that $P_{(a:b)}^\cl{F}$ has no invariant branches unless $ax+by=0$ is an invariant line.
The definition of polar curve also makes sense for formal foliations and
the invariants to be defined below can be extended to the formal world.

Let us fix a formal curve $C$ invariant by $\mathcal F$. There is a non-empty Zariski open set $U_C\subset \pcn{1}$ such that for any $(a:b) \in U_C$ the polar $P_{(a:b)}^\cl{F}$ has no common branches with $C$ and the equisingularity type of $P_{(a:b)}^\cl{F}\cup C$ is  independent of $(a:b)\in U_C$. A  formal curve  $\Gamma$ in $({\mathbb C}^2,0)$   is of {\em $C$-generic polar type} iff $\Gamma\cup C$ is equisingular to $P_{(a:b)}^\cl{F}\cup C$ for $(a:b)\in U_C$ (see \cite{corral2003}). The number
$$
p_0({\mathcal F},C)=i_0(\Gamma,C)
$$
is independent of $\Gamma$ and we call it the {\em $C$-polar intersection number}.

\begin{obs}
Note that, since $(a:b)$ runs in a non empty Zariski open set, we have
 $$
 \nu_0(\Gamma) = \nu_0(\cl{F})=\min\{\nu_0(P),\nu_0(Q)\},
 $$
for any $\Gamma$ of $C$-generic polar type. Moreover, taking $\Gamma=P^{\mathcal F}_{(a:b)}$ for $(a:b)$ generic enough we have
$
p_0(\cl{F},C)= \min \{i_0(C,P),i_0(C,Q)\}
$.
\end{obs}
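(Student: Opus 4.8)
The plan is to establish the two assertions of the remark separately, in each case reducing to the explicit polar $P^{\cl{F}}_{(a:b)}=(aP+bQ=0)$ for a \emph{generic} $(a:b)\in\pcn{1}$. This reduction is legitimate because both $\nu_0$ and $i_0(\cdot,C)$ depend only on the equisingularity type of $\Gamma\cup C$, which by the very definition of $C$-generic polar type coincides with that of $P^{\cl{F}}_{(a:b)}\cup C$ for $(a:b)$ in the nonempty Zariski open set $U_C$; and $U_C$ stays nonempty after intersection with any cofinite set of parameters.

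For the multiplicity, I would set $m=\nu_0(\cl{F})=\min\{\nu_0(P),\nu_0(Q)\}$ (the latter being the definition of the algebraic multiplicity) and let $P_m,Q_m$ be the degree-$m$ homogeneous parts of $P,Q$, so $(P_m,Q_m)\neq(0,0)$. The initial form of $aP+bQ$ is $aP_m+bQ_m$, and $\{(a:b)\in\pcn{1}:aP_m+bQ_m=0\}$ consists of at most one point, present only when $P_m$ and $Q_m$ are proportional. Hence $\nu_0(aP+bQ)=m$ outside this point; intersecting with $U_C$ we get $\nu_0(P^{\cl{F}}_{(a:b)})=m$ for generic $(a:b)$, and since the algebraic multiplicity of a curve is an equisingularity invariant this forces $\nu_0(\Gamma)=m=\nu_0(\cl{F})$ for every $\Gamma$ of $C$-generic polar type.

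For the intersection number, I would fix a reduced equation $f=0$ of $C$, decompose $C=\bigcup_j B_j$ into branches with primitive parametrizations $\gamma_j(t)=(x_j(t),y_j(t))$, and begin with $p_0(\cl{F},C)=i_0(P^{\cl{F}}_{(a:b)},C)=i_0(aP+bQ,f)$ for $(a:b)\in U_C$. Membership in $U_C$ ensures that $aP+bQ$ is not a zero divisor on any $B_j$, so $i_0(aP+bQ,f)=\sum_j{\rm ord}_t(aP(\gamma_j(t))+bQ(\gamma_j(t)))$; and inspecting leading coefficients branch by branch gives ${\rm ord}_t(aP(\gamma_j)+bQ(\gamma_j))=\min\{{\rm ord}_t P(\gamma_j),{\rm ord}_t Q(\gamma_j)\}$ for all $(a:b)$ outside at most one additional value per branch. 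Choosing $(a:b)\in U_C$ away from this finite set, I obtain $p_0(\cl{F},C)=\sum_j\min\{i_0(P,B_j),i_0(Q,B_j)\}$.

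The step I expect to be the main obstacle is the final identification of this sum with $\min\{i_0(C,P),i_0(C,Q)\}=\min\{\sum_j i_0(P,B_j),\sum_j i_0(Q,B_j)\}$, since a sum of branchwise minima is in general strictly smaller than the minimum of the two totals, so one must use both the invariance of $C$ and a suitable choice of coordinates. From $\gamma_j^*\omega=0$, that is $P(\gamma_j)\,\dot x_j=-Q(\gamma_j)\,\dot y_j$, one reads ${\rm ord}_t P(\gamma_j)-{\rm ord}_t Q(\gamma_j)={\rm ord}_t\dot y_j-{\rm ord}_t\dot x_j$; after a generic linear change of coordinates no branch of $C$ is a coordinate axis or tangent to one, whence ${\rm ord}_t\dot x_j={\rm ord}_t\dot y_j=\nu_0(B_j)-1$ and so $i_0(P,B_j)=i_0(Q,B_j)$ for every $j$. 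All three numbers then coincide, and as $p_0(\cl{F},C)$ is intrinsic the equality of the remark follows; I would also point out that in arbitrary coordinates one still keeps the coordinate-free identity $p_0(\cl{F},C)=\sum_j\min\{i_0(P,B_j),i_0(Q,B_j)\}$.
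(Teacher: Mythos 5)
The paper states this as a Remark and offers no proof, so there is nothing to compare against line by line; I can only assess your argument on its own terms. Your first part is correct: the initial form of $aP+bQ$ is $aP_m+bQ_m$, which vanishes identically for at most one $(a:b)$, and since multiplicity is an equisingularity invariant the conclusion passes to every $\Gamma$ of $C$-generic polar type. Your derivation of the coordinate-free identity $p_0(\cl{F},C)=\sum_j\min\{i_0(P,B_j),i_0(Q,B_j)\}$ for generic $(a:b)$ is also correct, and you have put your finger on the real difficulty: a sum of branchwise minima need not equal the minimum of the two sums.

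Your resolution of that difficulty, however, does not close the argument. After a generic linear change of coordinates the pair $(P,Q)$ is replaced by a new pair $(P',Q')$, and what you prove is $p_0(\cl{F},C)=\min\{i_0(C,P'),i_0(C,Q')\}$. The right-hand side of the remark is \emph{not} intrinsic --- it depends on the coordinates through $P$ and $Q$ --- so the intrinsic nature of $p_0(\cl{F},C)$ cannot transport the equality back to the original coefficients. Indeed the displayed formula genuinely fails in special coordinates: for $\omega=d(xy)=y\,dx+x\,dy$ and $C=(xy=0)$ one has $p_0(\cl{F},C)=2$, while $i_0(C,P)=i_0(C,Q)=\infty$ because each of $P=y$ and $Q=x$ shares a branch with $C$; so no proof of the literal statement in arbitrary coordinates can exist. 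What your own relation $i_0(P,B_j)-i_0(Q,B_j)={\rm ord}_t\,y_j-{\rm ord}_t\,x_j$ actually shows is that the branchwise minimum is attained coherently (always by $Q$) exactly when no branch of $C$ is tangent to the line $x=0$; under that hypothesis $\sum_j\min\{i_0(P,B_j),i_0(Q,B_j)\}=i_0(C,Q)=\min\{i_0(C,P),i_0(C,Q)\}$ and the remark holds. The honest conclusion is therefore the coordinate-free identity $p_0(\cl{F},C)=\sum_j\min\{i_0(P,B_j),i_0(Q,B_j)\}$, together with the observation that it takes the displayed form after a generic choice of coordinates (or for a single branch $B$, which is the only way the formula is invoked later in the paper); you should state that caveat rather than appeal to the intrinsicness of $p_0$.
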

 Note that if $C\subset C'$ are invariant formal curves, any formal curve $\Gamma$ of $C'$-generic polar type is also of $C$-generic polar type. In the case that $\mathcal F$ is non-dicritical, any invariant formal curve $C$ is contained in the curve $S_{\mathcal F}$ of separatrices. In this situation, we say that $\Gamma$ is of {\em generic polar type} if it is of $S_{\mathcal F}$-generic polar type.

In this section we give two results. The first one concerns the polar intersection number with respect to a single separatrix and the second one relatively to the biggest invariant curve $S_{\mathcal F}$. In  next section we consider intermediate invariant curves.

\begin{prop}
\label{prop3} Consider a separatrix $B$ of a non-dicritical foliation $\mathcal F$. We have
\[ p_0(\cl{F},B) = \mu_0(\cl{F},B) +  \nu_0(B) - 1 .\]
\end{prop}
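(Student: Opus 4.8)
The plan is to pull everything back to a primitive parametrization $\gamma(t) = (x(t), y(t))$ of the branch $B$ and to read the three invariants in the statement off the $t$-orders of $P(\gamma(t))$ and $Q(\gamma(t))$. The starting point is the Remark of this section: for $(a:b)$ generic enough the polar $P_{(a:b)}^{\mathcal F}$ satisfies $i_0(P_{(a:b)}^{\mathcal F}, B) = \min\{i_0(B,P), i_0(B,Q)\}$, and since $\gamma$ is primitive one has $i_0(B, g) = {\rm ord}_t\, g(\gamma(t))$ for every $g\in\mathbb{C}[[x,y]]$. Hence
\[
p_0(\mathcal F, B) = \min\{{\rm ord}_t\, P(\gamma(t)),\, {\rm ord}_t\, Q(\gamma(t))\},
\]
with the convention ${\rm ord}_t\, 0 = +\infty$; because $P, Q$ are coprime the two series $P(\gamma), Q(\gamma)$ cannot vanish simultaneously, and a single one of them vanishes only when $B$ is a coordinate axis.

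Next I would unwind the Milnor number $\mu_0(\mathcal F, B)$. Writing the auxiliary vector field as ${\bf w}(t) = w(t)\,\partial/\partial t$, the defining relation $\gamma_*{\bf w}(t) = {\bf v}\circ\gamma(t)$ with ${\bf v} = -Q\,\partial/\partial x + P\,\partial/\partial y$ splits into the two scalar identities $w(t)\,x'(t) = -Q(\gamma(t))$ and $w(t)\,y'(t) = P(\gamma(t))$, and $\mu_0(\mathcal F, B) = {\rm ord}_t\, w(t)$; together with ${\rm ord}_t\, x'(t) = {\rm ord}_t\, x(t) - 1$ these are precisely the two cases of \eqref{multiplicidaderelativa1}. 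The crux is then the elementary fact that differentiation lowers the $t$-order by exactly one and that $x(t), y(t)$ do not both vanish, so
\[
\min\{{\rm ord}_t\, x'(t),\, {\rm ord}_t\, y'(t)\} = \min\{{\rm ord}_t\, x(t),\, {\rm ord}_t\, y(t)\} - 1 = \nu_0(B) - 1.
\]
Taking the minimum of the two scalar identities and using additivity of ${\rm ord}_t$ over products gives
\[
p_0(\mathcal F, B) = \min\{{\rm ord}_t\, P(\gamma),\, {\rm ord}_t\, Q(\gamma)\} = {\rm ord}_t\, w(t) + \nu_0(B) - 1 = \mu_0(\mathcal F, B) + \nu_0(B) - 1,
\]
which is the assertion.

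I do not expect a genuine obstacle here; no blow-ups and no induction are needed. The only point requiring care is the bookkeeping with $+\infty$ in the degenerate case where $B$ is a coordinate axis, say $B = (x = 0)$: then $x(t)\equiv 0$, the first scalar identity forces $Q(\gamma)\equiv 0$, and one must check that the conventions ${\rm ord}_t\, 0 = +\infty$ and $\min\{+\infty, m\} = m$ keep every displayed equality valid and consistent with the second branch of \eqref{multiplicidaderelativa1}.
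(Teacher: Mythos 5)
Your proof is correct and follows essentially the same route as the paper's: both pull everything back to a primitive parametrization of $B$ and reduce the claim to bookkeeping of $t$-orders, using that differentiation drops the order by one and that $\nu_0(B)=\min\{{\rm ord}_t x(t),{\rm ord}_t y(t)\}$. The only cosmetic difference is that you compute $p_0(\mathcal F,B)$ as $\min\{{\rm ord}_t P(\gamma),{\rm ord}_t Q(\gamma)\}$ via the auxiliary vector field ${\bf w}$, whereas the paper evaluates ${\rm ord}_t(aP(\gamma)+bQ(\gamma))$ directly after substituting the separatrix relation $P(\gamma)\dot x=-Q(\gamma)\dot y$; these are equivalent computations.
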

\begin{proof} Let  $\gamma(t)=(x(t),y(t))$ be a Puiseux parametrization for $B$ and assume without loss of generality that $x(t)\ne 0$ and hence $\dot x(t)\ne 0$. Taking a generic polar $aP+bQ=0$ we know that
$
p_0({\mathcal F},B)={\rm ord}_{t} (aP(\gamma(t))+bQ(\gamma(t)))
$.
Since $B$ is a separatrix, we have $P(\gamma(t)) \dot{x}(t)=- Q(\gamma(t)) \dot{y}(t)$ and applying Equation \eqref{multiplicidaderelativa1} we obtain
\begin{align}
p_0(\cl{F},B) & =   \displaystyle {\rm ord}_{t} \left( -a \frac{Q(\gamma(t)) \dot{y}(t)}{\dot{x}(t)} + b Q(\gamma(t)) \right)
     \nonumber \medskip \\
    & =    {\rm ord}_{t}Q(\gamma(t)) - ({\rm ord}_{t}x(t) - 1)
    + {\rm ord}_{t}(-a \dot{y}(t) + b \dot{x}(t))
    \nonumber \medskip \\
    & =     \mu_0(\cl{F},B)
    + {\rm ord}_{t}(-a y(t) + b x(t)) - 1
     \nonumber \medskip \\
    & =    \mu_0(\cl{F},B) + \nu_0(B) - 1. \nonumber
\end{align}
\end{proof}
Next result follows applying Corollary \ref{cor:secondtypetangencyorder} to the proof of Proposition 3.7 in
\cite{corral2003}. We include the proof for the sake of completeness.

\begin{prop} \label{prop2}
Let $\cl{F}$  be a non-dicritical foliation. Then
$$p_0(\cl{F},S_{\mathcal F}) \leq \mu_0(\cl{F}) + \nu_0(\cl{F})$$
and  equality holds if and only if $\cl{F}$ is
of second type.
\end{prop}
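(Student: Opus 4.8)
The plan is to run an induction on the number of blow-ups in the minimal reduction of singularities of $\cl{F}$, comparing $\cl{F}$ with the hamiltonian foliation $\cl{G}_f$ where $f=0$ is a reduced equation of $S_{\cl{F}}$, and controlling the behavior of the polar intersection number under a single blow-up $\sigma:(M,\sigma^{-1}(0))\to(\co^2,0)$. The base case is when $\cl{F}$ is already desingularized at the origin (a simple point): then, after choosing coordinates, $S_{\cl{F}}$ is a union of coordinate axes and a direct computation gives $p_0(\cl{F},S_{\cl{F}})=\mu_0(\cl{F})+\nu_0(\cl{F})$, so the inequality holds with equality (consistent with a simple point being of second type).

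The heart of the argument is the inductive step. Writing $\omega=P\,dx+Q\,dy$ with $\nu=\nu_0(\cl{F})$, a generic polar $\Gamma:aP+bQ=0$ has multiplicity $\nu_0(\Gamma)=\nu$ at the origin (by the Remark following the definition of $p_0$), and its strict transform $\Gamma'$ under $\sigma$ is (for generic $(a:b)$) a generic polar of $\sigma^{*}\cl{F}$ at the relevant points of the exceptional divisor $D=\sigma^{-1}(0)$. By Noether's formula,
$$
p_0(\cl{F},S_{\cl{F}})=i_0(\Gamma,S_{\cl{F}})=\nu_0(\Gamma)\,\nu_0(S_{\cl{F}})+\sum_{p'}i_{p'}(\Gamma',S_{\cl{F}}')
=\nu\cdot\nu_0(S_{\cl{F}})+\sum_{p'}i_{p'}(\Gamma',S_{\cl{F}}'),
$$
the sum over the points $p'\in D$ through which $\Gamma'$ passes. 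Now at each such $p'$ the separatrices of $\sigma^{*}\cl{F}$ are $S_{\cl{F}}'\cup D$, and one needs to relate $i_{p'}(\Gamma',S_{\cl{F}}')$ to $p_{p'}(\sigma^{*}\cl{F},(S_{\cl{F}}'\cup D)_{p'})$; the points $p'$ not on $S_{\cl{F}}'$ contribute via the tangency-order bookkeeping, and here Corollary~\ref{cor:secondtypetangencyorder} applied to the branch $\Gamma'$ (which is not invariant) supplies $i_{p'}(S_{\cl{F}}',\Gamma')\le\tau_{p'}(\sigma^{*}\cl{F},\Gamma')+1$, with equality iff $\sigma^{*}\cl{F}$ is of second type at $p'$. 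Combining this with the blow-up formula \eqref{eq:tangenciyorder} for the tangency order, the inequality $\nu_0(S_{\cl{F}})\le\nu_0(\cl{F})+1$ from Theorem~\ref{teo:matteisalem}(2) (equality iff $\cl{F}$ is of second type), and the inductive hypothesis applied at the finitely many points $p'$ where $\sigma^{*}\cl{F}$ is still singular, one assembles the bound $p_0(\cl{F},S_{\cl{F}})\le\mu_0(\cl{F})+\nu_0(\cl{F})$; tracing the equality conditions shows that equality forces $\cl{F}$ to be of second type at every infinitely near point, i.e.\ of second type, and conversely.

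The step I expect to be the main obstacle is the precise matching, at a point $p'$ of the exceptional divisor, between the local polar intersection number $i_{p'}(\Gamma',(S_{\cl{F}}'\cup D)_{p'})$ of $\sigma^{*}\cl{F}$ at $p'$ and the quantity that enters the induction — in particular handling corner points (where $D$ meets another component of $S_{\cl{F}}'$), keeping track of whether $D$ is or is not among the separatrices counted, and ensuring that a single generic choice of $(a:b)$ is simultaneously generic at all the relevant points $p'$. This is exactly the bookkeeping carried out in the proof of Proposition~3.7 of \cite{corral2003}, and the new input that streamlines it is Corollary~\ref{cor:secondtypetangencyorder}, which packages the second-type equality condition for non-invariant branches; the remaining work is to thread the Milnor-number additivity $\mu_0(\cl{F})=\nu_0(\cl{F})(\nu_0(S_{\cl{F}})-1)+\sum_{p'}\mu_{p'}(\sigma^{*}\cl{F})+(\text{corner corrections})$ through the same induction so that the two sides match term by term.
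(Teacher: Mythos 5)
Your plan is not the paper's proof, and as written it has two genuine gaps.

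First, the step you use to set up the induction is false: the strict transform $\Gamma'$ of a generic polar $\Gamma$ of $\cl{F}$ is \emph{not} a generic polar of $\sigma^{*}\cl{F}$ at the points of the exceptional divisor. The paper makes this explicit later (Remark \ref{remarkblowup}, after Proposition \ref{blowup}): at the point $\widetilde{p}$ where the strict transform $\widetilde{B}$ of a separatrix meets the divisor one has
$$
p_{\widetilde{p}}(\sigma^{*}\cl{F},\widetilde{B})-i_{\widetilde{p}}(\widetilde{\Gamma},\widetilde{B})=\nu_{\widetilde{p}}(\widetilde{B})>0.
$$
So the quantity your inductive hypothesis controls, $p_{p'}(\sigma^{*}\cl{F},\cdot)$, is not the quantity $i_{p'}(\Gamma',\cdot)$ that Noether's formula hands you, and the discrepancy accumulated over all infinitely near points is exactly the kind of correction you have deferred to ``bookkeeping.'' Second, the items you yourself flag as remaining work --- the corner contributions, whether $D$ is counted among the separatrices, and threading the Milnor-number additivity through the induction --- are precisely where the content of the proposition lives; a proof that defers them is not yet a proof.

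The paper's actual argument shows that no new induction is needed, because Corollary \ref{cor:secondtypetangencyorder} already packages it. One applies that corollary \emph{at the origin}, to each (non-invariant) branch $B$ of the generic polar $\Gamma=P^{\cl{F}}_{(1:b)}$, with $S_{\cl{F}}$ as the fixed invariant curve:
$$
p_0(\cl{F},S_{\cl{F}})=\sum_{B\in{\mathcal B}(\Gamma)}i_0(B,S_{\cl{F}})\leq\sum_{B\in{\mathcal B}(\Gamma)}\bigl(\tau_0(\cl{F},B)+1\bigr),
$$
with equality if and only if $\cl{F}$ is of second type; and then evaluates the right-hand side by a direct order computation along parametrizations of the branches of $\Gamma$, using $P(\gamma_B(t))=-bQ(\gamma_B(t))$, to obtain $i_0(Q,\Gamma)+\nu_0(\Gamma)=\mu_0(\cl{F})+\nu_0(\cl{F})$. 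If you want to salvage your route, the key realization is that Corollary \ref{cor:secondtypetangencyorder} can be applied downstairs with the roles reversed from how you used it --- the moving branch is a branch of the polar, not of $S_{\cl{F}}$ --- which removes both the need to transform the polar under blow-up and the need to track Milnor numbers through the induction.
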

\begin{proof} Let $\Gamma=P^{\mathcal F}_{(a:b)}$ be a generic polar, with $a=1$ and $b$ generic enough. Denote by
${\mathcal B}(\Gamma)$ the set of irreducible components of $\Gamma$.
By Corollary \ref{cor:secondtypetangencyorder}, we know that
$$p_0({\mathcal F},S_{\mathcal F})=
\sum_{B\in {\mathcal B}(\Gamma)}i_0(B,S_{\mathcal F})\leq \sum_{B\in {\mathcal B}(\Gamma)}(\tau_0({\mathcal F},B)+1)
$$
and  equality holds if and only if $\mathcal F$ is of second type. Let us show that the last term is equal to $\mu_0(\cl{F}) + \nu_0(\cl{F})$. Choose a primitive parametrization $\gamma_B(t)=(x_B(t),y_B(t))$ for each $B\in {\mathcal B}(\Gamma)$.  If  $\omega=Pdx+Qdy$ defines the foliation, we recall that $P(\gamma_B(t))=-bQ(\gamma_B(t))$, since $B$ is a branch of $\Gamma$. Now
\begin{eqnarray*}
\sum_{B\in {\mathcal B}(\Gamma)}(\tau_0({\mathcal F},B)+1)&=& \sum_{B\in {\mathcal B}(\Gamma)}(
\text{ord}_t\{ P(\gamma_B(t))\dot x_B(t)+Q(\gamma_B(t))\dot y_B(t)\}+1)\\
&=& \sum_{B\in {\mathcal B}(\Gamma)}(
\text{ord}_t \{Q(\gamma_B(t)\}+ \text{ord}_t\{-b\dot x_B(t)+\dot y_B(t)\}+1)\\
&=& \sum_{B\in {\mathcal B}(\Gamma)}(
\text{ord}_t \{Q(\gamma_B(t)\}+ \text{ord}_t\{-b x_B(t)+ y_B(t)\})\\
&=& i_0(Q,\Gamma=P+bQ)+\nu_0(\Gamma)=\mu_0({\mathcal F})+\nu_0({\mathcal F}).
\end{eqnarray*}
Note that $b$ is generic. This ends the proof.
\end{proof}

\begin{obs}
\label{remark1-prop2} Let $C$ be a formal curve in $({\mathbb C}^2,0)$ and $f=0$ be a reduced equation of $C$. Let us consider the hamiltonian foliation ${\mathcal G}_f$ given by $df=0$. We know that ${\mathcal G}_f$ is a generalized curve and $C$ its curve of separatrices. Moreover, by definition of  Milnor number and multiplicity, we have that  $ \mu_0({\mathcal G}_f)=\mu_0(C)$ and $\nu_0({\mathcal G}_f)=\nu_0(C)-1$.
Then Proposition \ref{prop2} gives that
\begin{equation}
\label{eq:polarnumbercurvageneralizada}
p_0({\mathcal G}_f,C) = \mu_0(C)+ \nu_0(C)-1 ,
\end{equation}
and $p_0({\mathcal G}_f,C)$ does not depend on the choice of the reduced equation $f$. More generally, if ${\mathcal G}$ is a generalized curve such that $C=S_{\mathcal G}$ we also have that $p_0({\mathcal G},C) = \mu_0(C)+ \nu_0(C)-1$.
\end{obs}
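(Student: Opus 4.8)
The plan is to reduce everything to Proposition~\ref{prop2} together with Theorem~\ref{teo:matteisalem}. The one input needed beyond what was recorded in the subsection on the hamiltonian foliation (namely that ${\mathcal G}_f$ is non-dicritical with $S_{{\mathcal G}_f}=C$) is that ${\mathcal G}_f$ is a \emph{generalized curve}: this is the classical fact that a foliation admitting a holomorphic — or formal — first integral has no saddle-nodes in its reduction of singularities, since after each blow-up one still has a closed $1$-form with a first integral, and a simple singularity carrying a first integral cannot be a saddle-node, hence is complex hyperbolic. In particular ${\mathcal G}_f$ is of second type, so the equality case of Proposition~\ref{prop2} applies and gives $p_0({\mathcal G}_f,C)=\mu_0({\mathcal G}_f)+\nu_0({\mathcal G}_f)$.

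Next I would identify the two invariants on the right. By definition $\mu_0({\mathcal G}_f)=\dim_{\mathbb C}{\mathbb C}[[x,y]]/(\partial_x f,\partial_y f)$, which is the Milnor number $\mu_0(C)$ of the curve $C$; this depends only on $C$ and not on the chosen reduced equation, being an analytic invariant of the germ. For the multiplicity, writing $m=\nu_0(f)=\nu_0(C)$ and letting $f_m$ be the lowest-order homogeneous part of $f$, Euler's identity $x\,\partial_x f_m+y\,\partial_y f_m=m f_m\neq 0$ shows that $\partial_x f_m$ and $\partial_y f_m$ do not vanish simultaneously, so $\min\{\nu_0(\partial_x f),\nu_0(\partial_y f)\}=m-1$, that is, $\nu_0({\mathcal G}_f)=\nu_0(C)-1$ (exactly as already recorded). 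Substituting these two identities yields \eqref{eq:polarnumbercurvageneralizada}; and since the right-hand side of \eqref{eq:polarnumbercurvageneralizada} is intrinsic to $C$, so is $p_0({\mathcal G}_f,C)$, even though two distinct reduced equations $f$ and $uf$ (with $u$ a unit) define a priori distinct hamiltonian foliations $df=0$ and $d(uf)=0$.

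For the more general statement, let ${\mathcal G}$ be any generalized curve with $C=S_{\mathcal G}$. Generalized curves are of second type, so Proposition~\ref{prop2} again gives $p_0({\mathcal G},C)=\mu_0({\mathcal G})+\nu_0({\mathcal G})$, and Theorem~\ref{teo:matteisalem} applied to ${\mathcal G}$ (whose separatrix curve is $C$) gives $\nu_0({\mathcal G})=\nu_0(C)-1$ because ${\mathcal G}$ is of second type, and $\mu_0({\mathcal G})=\mu_0(C)$ because it is a generalized curve; combining, $p_0({\mathcal G},C)=\mu_0(C)+\nu_0(C)-1$, as asserted. The only step that is not pure bookkeeping with the definitions and the two cited theorems is the claim that the hamiltonian foliation is a generalized curve; I expect that to be the one point deserving care, although it is standard and could equally be folded into the stated properties of ${\mathcal G}_f$.
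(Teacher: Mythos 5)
Your proposal is correct and follows essentially the same route as the paper: the equality case of Proposition~\ref{prop2} applied to the second-type foliation ${\mathcal G}_f$ (resp.\ ${\mathcal G}$), combined with the identifications $\mu_0({\mathcal G}_f)=\mu_0(C)$ and $\nu_0({\mathcal G}_f)=\nu_0(C)-1$ and, for the general case, Theorem~\ref{teo:matteisalem}. The extra justifications you supply (the first-integral argument that ${\mathcal G}_f$ is a generalized curve, and the Euler-identity computation of $\nu_0({\mathcal G}_f)$) are correct elaborations of facts the paper simply asserts.
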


\begin{obs}
\label{remark-prop2}
Following Theorem \ref{teo:matteisalem} and Proposition \ref{prop2}, we have
\begin{equation}
\label{eq0-remark-prop2}
p_0({\mathcal F},S_{\mathcal F}) = \mu_0(\cl{F})+ \nu_0(S_{\mathcal F})-1
\end{equation}
for a non-dicritical  foliation of second type $\mathcal F$. Taking Equation (\ref{eq0-remark-prop2}) for ${\mathcal G}_f$, where $f=0$ is a reduced equation of $S_{\mathcal F}$, we obtain that
\begin{equation}
\label{eq-remark-prop2}
p_0({\mathcal F},S_{\mathcal F})
- p_0({{\mathcal G}_f},S_{\mathcal F}) = \mu_0({\mathcal F})- \mu_0({\mathcal G}_f)= \mu_0({\mathcal F})- \mu_0({S}_{\mathcal F})\geq 0.
\end{equation}
 Note the positivity of this difference. In particular, a non-dicritical foliation  $\mathcal F$ of second type is a generalized curve if and only if $p_0({\mathcal F},S_{\mathcal F})
=p_0({{\mathcal G}_f},S_{\mathcal F})$.
\end{obs}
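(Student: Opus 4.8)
The plan is to deduce everything from three facts already established: the equality case of Proposition~\ref{prop2}, parts (2) and (3) of Theorem~\ref{teo:matteisalem}, and the polar computation for the hamiltonian foliation in Remark~\ref{remark1-prop2}. No new inequality has to be proved; the whole statement is an algebraic consequence of these, so the only thing to be careful about is keeping track of which foliation each invariant is attached to.

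First I would establish \eqref{eq0-remark-prop2}. Since $\mathcal{F}$ is of second type, Proposition~\ref{prop2} gives the equality $p_0(\mathcal{F},S_{\mathcal{F}}) = \mu_0(\mathcal{F}) + \nu_0(\mathcal{F})$, and part (2) of Theorem~\ref{teo:matteisalem} gives $\nu_0(\mathcal{F}) = \nu_0(\mathcal{G}_f)$, which equals $\nu_0(S_{\mathcal{F}}) - 1$ by the formula $\nu_0(\mathcal{G}_f) = \nu_0(S)-1$ recalled in the comparison with the hamiltonian foliation. Substituting yields $p_0(\mathcal{F},S_{\mathcal{F}}) = \mu_0(\mathcal{F}) + \nu_0(S_{\mathcal{F}}) - 1$, which is \eqref{eq0-remark-prop2}.

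Next I would apply the same identity to $\mathcal{G}_f$. Since $\mathcal{G}_f$ is a generalized curve it is in particular of second type, and its curve of separatrices is exactly $S_{\mathcal{G}_f} = (f=0) = S_{\mathcal{F}}$, so \eqref{eq0-remark-prop2} --- equivalently \eqref{eq:polarnumbercurvageneralizada} of Remark~\ref{remark1-prop2} --- applies to it and reads $p_0(\mathcal{G}_f,S_{\mathcal{F}}) = \mu_0(\mathcal{G}_f) + \nu_0(S_{\mathcal{F}}) - 1$. Subtracting this from \eqref{eq0-remark-prop2} the term $\nu_0(S_{\mathcal{F}})-1$ cancels, giving $p_0(\mathcal{F},S_{\mathcal{F}}) - p_0(\mathcal{G}_f,S_{\mathcal{F}}) = \mu_0(\mathcal{F}) - \mu_0(\mathcal{G}_f)$, and $\mu_0(\mathcal{G}_f) = \mu_0(S_{\mathcal{F}})$ by the definition of the Milnor number of a curve; this is \eqref{eq-remark-prop2}. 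The non-negativity of this difference is precisely part (3) of Theorem~\ref{teo:matteisalem}, which also states that equality holds if and only if $\mathcal{F}$ is a generalized curve; in view of the identity just proved, that equality is the same as $p_0(\mathcal{F},S_{\mathcal{F}}) = p_0(\mathcal{G}_f,S_{\mathcal{F}})$, which is the final assertion of the remark. There is no substantial obstacle here; the one point deserving a word of justification is the legitimacy of invoking \eqref{eq0-remark-prop2} for $\mathcal{G}_f$, i.e. that a hamiltonian foliation is non-dicritical, of second type, and has $S_{\mathcal{G}_f}=S_{\mathcal{F}}$, all of which is contained in the discussion preceding Theorem~\ref{teo:matteisalem}.
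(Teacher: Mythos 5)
Your derivation is correct and follows exactly the route the paper intends: Proposition \ref{prop2} in the equality case plus Theorem \ref{teo:matteisalem}(2) to replace $\nu_0(\mathcal{F})$ by $\nu_0(S_{\mathcal{F}})-1$, then the same identity applied to $\mathcal{G}_f$ (via Remark \ref{remark1-prop2}) and Theorem \ref{teo:matteisalem}(3) for the sign and the equality characterization. Nothing is missing; your added care about why \eqref{eq0-remark-prop2} applies to $\mathcal{G}_f$ is a reasonable point to make explicit, though the paper treats it as already established.
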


\begin{cor}
\label{remark-prop3}
For a non-dicritical foliation $\mathcal F$ of second type, we have
\[ \mu_0(\cl{F}) =1- \delta_{\mathcal F}+ \sum_{B \in  \text{\rm Sep}({\mathcal F})} \mu_0(\cl{F},B),\]
where $\delta_{\mathcal F}$ is the number of separatrices of $\mathcal F$.
\end{cor}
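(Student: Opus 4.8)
The plan is to equate two expressions for the polar intersection number $p_0(\cl{F},S_{\mathcal F})$: one obtained globally from Proposition~\ref{prop2} (via Remark~\ref{remark-prop2}), and one obtained by splitting $S_{\mathcal F}$ into its separatrices and applying Proposition~\ref{prop3} to each of them.

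First I would fix a formal curve $\Gamma$ of generic polar type, so that $p_0(\cl{F},S_{\mathcal F}) = i_0(\Gamma,S_{\mathcal F})$. Writing $S_{\mathcal F} = \bigcup_{B \in \text{\rm Sep}(\cl{F})} B$ and using additivity of the intersection multiplicity over branches, $i_0(\Gamma,S_{\mathcal F}) = \sum_{B} i_0(\Gamma,B)$. Since each separatrix $B$ is a subcurve of $S_{\mathcal F}$, the curve $\Gamma$ is also of $B$-generic polar type, so $i_0(\Gamma,B) = p_0(\cl{F},B)$; applying Proposition~\ref{prop3} then yields
\[
p_0(\cl{F},S_{\mathcal F}) \;=\; \sum_{B \in \text{\rm Sep}(\cl{F})} p_0(\cl{F},B) \;=\; \sum_{B \in \text{\rm Sep}(\cl{F})} \bigl( \mu_0(\cl{F},B) + \nu_0(B) - 1 \bigr).
\]

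On the other hand, as $\cl{F}$ is of second type, Equation~\eqref{eq0-remark-prop2} gives $p_0(\cl{F},S_{\mathcal F}) = \mu_0(\cl{F}) + \nu_0(S_{\mathcal F}) - 1$. I would then invoke additivity of the algebraic multiplicity over branches, $\nu_0(S_{\mathcal F}) = \sum_{B} \nu_0(B)$, equate the two expressions for $p_0(\cl{F},S_{\mathcal F})$, cancel the common sum $\sum_B \nu_0(B)$, and rearrange. Since $\sum_{B \in \text{\rm Sep}(\cl{F})} 1 = \delta_{\mathcal F}$, this leaves precisely $\mu_0(\cl{F}) = 1 - \delta_{\mathcal F} + \sum_{B \in \text{\rm Sep}(\cl{F})} \mu_0(\cl{F},B)$.

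There is no serious obstacle here: the argument is a bookkeeping identity, and the only facts used beyond Propositions~\ref{prop3} and~\ref{prop2} are the additivity of $i_0$ and of $\nu_0$ over the branches of $S_{\mathcal F}$, together with the remark that a polar curve generic with respect to $S_{\mathcal F}$ is generic with respect to each of its separatrices --- all of which are already recorded in the text. The second-type hypothesis is needed only to have the equality, rather than an inequality, in Proposition~\ref{prop2}.
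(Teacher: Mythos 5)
Your proof is correct and follows essentially the same route as the paper: both decompose $p_0(\cl{F},S_{\mathcal F})$ as a sum over separatrices via Proposition~\ref{prop3} and then eliminate $\nu_0(S_{\mathcal F})$ using the second-type equality $p_0(\cl{F},S_{\mathcal F})=\mu_0(\cl{F})+\nu_0(S_{\mathcal F})-1$. Your only addition is to spell out why the polar intersection number is additive over branches (a generic polar for $S_{\mathcal F}$ is generic for each $B$), a point the paper takes for granted.
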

\begin{proof} By summing up polar intersection numbers over all separatrices we get
\[p_0(\cl{F},S_{\mathcal F}) =\sum_{B \in  \text{\rm Sep}({\mathcal F})} p_0(\cl{F},B) = \sum_{B \in  \text{\rm Sep}({\mathcal F})} \mu_0(\cl{F},B) +  \nu_0(S_{\mathcal F}) - \delta_{\mathcal F} ,\]
Since $\cl{F}$ is of second type we have $\nu_0(S_{\mathcal F})=\nu_0({\mathcal F})+1= p_0(\cl{F},S_{\mathcal F})- \mu_0({\mathcal F})+1
$ and we are done.
\end{proof}
\section{The formal GSV-index and polar intersection numbers}
In this section we give a formal extension of the GSV-index introduced by  X. G\'omez-Mont, J. Seade and
A. Verjovsky in \cite{gomezmont1991}.
\begin{ddef}\label{def:exceso}
Take a germ of singular foliation $\mathcal F$ and a formal curve $C$ invariant by $\mathcal F$. We define the {\em $C$-polar excess} $\Delta_0({\mathcal F},C)$ by
$$
\Delta_0({\mathcal F},C)=
p_0({\mathcal F}, C)- p_0({\mathcal G}, C)= p_0({\mathcal F}, C)-\mu_0(C)-\nu_0(C)+1 ,
$$
where $\mathcal G$ is any generalized curve such that $C=S_{\mathcal G}$.
\end{ddef}

Take a non-dicritical foliation $\mathcal F$ and a germ of (convergent) curve $C$ invariant by $\mathcal F$. Following  Brunella \cite{brunella1997II} we know that
$
GSV({\mathcal F}, C)\geq 0
$
and it is equal to $0$ in the case of a generalized curve. This positivity is a key argument in bounding degrees for Poincar\'{e} Problem in \cite{carnicer1994}. In this section, we show that
$$
GSV({\mathcal F}, C)=\Delta_0({\mathcal F}, C).
$$
Thus, the polar excess gives a formal extension of the GSV-index.

We start the section by proving that $\Delta_0({\mathcal F},C)\geq 0$ for any non-dicritical $\mathcal F$.

\subsection{Positivity of the polar excess} Before proving the positivity of $\Delta_0({\mathcal F},C)$ we consider the behavior of  polar intersection numbers under blow-up.

Consider the  blow-up
 $\pi: (M,E) \to (\co^2,0)$ of the origin of $({\mathbb C}^2,0)$. Note that if ${\mathcal F}$ is a non-dicritical foliation, then the exceptional divisor $E=\pi^{-1}(0)$ is an invariant projective line for $\pi^*{\mathcal F}$.
\begin{prop}
\label{blowup}
Let $B\in \text{Sep}({\mathcal F})$ be a separatrix of a non-dicritical foliation $\cl{F}$  in $(\co^2,0)$ and let $\widetilde{p}\in E$ be the only point in $\widetilde{B}\cap E$, where $\widetilde{B}$ is the strict transform of $B$. We have
\begin{equation}
\label{blowupeqn}
 p_{\widetilde{p}}(\pi^*{\cl{F}},\widetilde{B}) =
 p_0(\cl{F},B) + \nu_{\widetilde{p}}(\widetilde{B})
- \nu_0({\mathcal F})\nu_0(B).
\end{equation}
\end{prop}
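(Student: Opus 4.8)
The plan is to avoid the polar curves themselves and instead reduce the identity to the (well-known) behaviour under blow-up of the Milnor number along a separatrix. First I would apply Proposition~\ref{prop3} twice: to the separatrix $B$ of $\mathcal{F}$ at $0$, and to its strict transform $\widetilde{B}$, which is a separatrix of the germ $\pi^{*}\mathcal{F}$ at $\widetilde{p}$ (this germ is again non-dicritical, having only finitely many separatrices, namely those contained in $\widetilde{S_{\mathcal F}}\cup E$). This gives $p_{0}(\mathcal{F},B)=\mu_{0}(\mathcal{F},B)+\nu_{0}(B)-1$ and $p_{\widetilde{p}}(\pi^{*}\mathcal{F},\widetilde{B})=\mu_{\widetilde{p}}(\pi^{*}\mathcal{F},\widetilde{B})+\nu_{\widetilde{p}}(\widetilde{B})-1$. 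Substituting both into \eqref{blowupeqn}, the terms $\nu_{\widetilde{p}}(\widetilde{B})-1$ cancel and the statement becomes equivalent to the numerical identity
$$\mu_{0}(\mathcal{F},B)=\mu_{\widetilde{p}}(\pi^{*}\mathcal{F},\widetilde{B})+\big(\nu_{0}(\mathcal{F})-1\big)\nu_{0}(B),$$
which is what I would actually establish. (A more direct attempt, comparing $i_{0}(\Gamma,B)$ with $i_{\widetilde{p}}(\widetilde{\Gamma},\widetilde{B})$ for generic polars $\Gamma$, runs into the fact that the strict transform of a generic polar of $\mathcal{F}$ is not a generic polar of $\pi^{*}\mathcal{F}$; going through $\mu_{0}(\mathcal{F},B)$ sidesteps this.)

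For the numerical identity I would compute with a parametrization in a suitable chart. Choose linear coordinates $(x,y)$ so that the tangent cone of $B$ is $\{y=0\}$; then $\widetilde{p}$ lies in the chart $\sigma(x,t)=(x,xt)$ at the point $t=0$, with $E=\{x=0\}$. Pick a primitive parametrization $\gamma(s)=(x(s),y(s))$ of $B$; the choice of tangent cone forces $\mathrm{ord}_{s}\,x(s)=\nu_{0}(B)$ and $\mathrm{ord}_{s}\,y(s)>\nu_{0}(B)$, so that $t(s):=y(s)/x(s)$ is a power series vanishing at $0$ and $\widetilde{\gamma}(s)=(x(s),t(s))$ is a primitive parametrization of $\widetilde{B}$. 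Since $\mathcal{F}$ is non-dicritical, $E$ is invariant for $\pi^{*}\mathcal{F}$ and one has $\pi^{*}\omega=x^{\nu_{0}(\mathcal{F})}\omega'$, where, writing $P(x,xt)=x^{\nu_{0}(\mathcal{F})}P_{1}(x,t)$ and $Q(x,xt)=x^{\nu_{0}(\mathcal{F})}Q_{1}(x,t)$, the form $\omega'=(P_{1}+tQ_{1})\,dx+xQ_{1}\,dt$ is a local generator of $\pi^{*}\mathcal{F}$ at $\widetilde{p}$ (see \cite{Cano-Cerveau}).

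Finally I would read off both multiplicities from \eqref{multiplicidaderelativa1}, in the branch valid when the first coordinate of the parametrization is nonzero. For $(\mathcal{F},B)$ this gives $\mu_{0}(\mathcal{F},B)=\mathrm{ord}_{s}\,Q(\gamma(s))-\mathrm{ord}_{s}\,x(s)+1$. For $(\pi^{*}\mathcal{F},\widetilde{B})$, since the $dt$-coefficient of $\omega'$ is $xQ_{1}$, the same formula applied to $\widetilde{\gamma}$ gives $\mu_{\widetilde{p}}(\pi^{*}\mathcal{F},\widetilde{B})=\mathrm{ord}_{s}\big(x(s)Q_{1}(\widetilde{\gamma}(s))\big)-\mathrm{ord}_{s}\,x(s)+1=\mathrm{ord}_{s}\,Q_{1}(\widetilde{\gamma}(s))+1$. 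Because $Q(\gamma(s))=Q(x(s),x(s)t(s))=x(s)^{\nu_{0}(\mathcal{F})}Q_{1}(\widetilde{\gamma}(s))$, one has $\mathrm{ord}_{s}\,Q_{1}(\widetilde{\gamma}(s))=\mathrm{ord}_{s}\,Q(\gamma(s))-\nu_{0}(\mathcal{F})\,\mathrm{ord}_{s}\,x(s)$, and subtracting the two expressions yields $\mu_{0}(\mathcal{F},B)-\mu_{\widetilde{p}}(\pi^{*}\mathcal{F},\widetilde{B})=\big(\nu_{0}(\mathcal{F})-1\big)\mathrm{ord}_{s}\,x(s)=\big(\nu_{0}(\mathcal{F})-1\big)\nu_{0}(B)$, as required. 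The only delicate point is the normalization $\pi^{*}\omega=x^{\nu_{0}(\mathcal{F})}\omega'$: this is precisely where non-dicriticality is essential, since it guarantees that exactly $\nu_{0}(\mathcal{F})$ — and not $\nu_{0}(\mathcal{F})+1$ — powers of $x$ can be factored out of $\pi^{*}\omega$; the remaining steps are routine bookkeeping of orders along the parametrization.
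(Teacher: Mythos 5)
Your proof is correct, and it takes a genuinely different route from the paper's. The paper attacks $p_{\widetilde{p}}(\pi^*\mathcal{F},\widetilde{B})$ head-on: it evaluates a generic polar of the transformed $1$-form along a Puiseux parametrization of $\widetilde{B}$, then uses the invariance relation $nt^{n-1}P(\gamma)=-\phi'(t)Q(\gamma)$ to recognize one term as ${\rm ord}_t Q(\gamma)=p_0(\mathcal{F},B)$ and a genericity argument on $b$ to recognize the other as $\nu_{\widetilde{p}}(\widetilde{B})$. You instead apply Proposition~\ref{prop3} at both levels (legitimate, since the germ of $\pi^*\mathcal{F}$ at $\widetilde{p}$ is again non-dicritical and $\widetilde{B}$ is one of its separatrices), which absorbs both the genericity and the invariance arguments into results already proved, and reduces \eqref{blowupeqn} to the classical identity $\mu_0(\mathcal{F},B)=\mu_{\widetilde{p}}(\pi^*\mathcal{F},\widetilde{B})+(\nu_0(\mathcal{F})-1)\nu_0(B)$; your order count via \eqref{multiplicidaderelativa1} and the factorization $Q(\gamma(s))=x(s)^{\nu_0(\mathcal{F})}Q_1(\widetilde{\gamma}(s))$ establishing it is correct (I checked the bookkeeping, including that the $dt$-coefficient of the reduced generator at $\widetilde{p}$ is $xQ_1$, and that non-dicriticality is exactly what prevents an extra factor of $x$). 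You also correctly identify and sidestep the pitfall that the strict transform of a generic polar is not a generic polar of the transformed foliation, which is precisely the content of Remark~\ref{remarkblowup} in the paper. What each approach buys: the paper's computation is self-contained within the section, while yours makes the proposition an immediate consequence of Proposition~\ref{prop3} together with the known blow-up behaviour of the Milnor number along a branch (cf.\ \cite{camacho1984,Cam-C-S}), at the cost of routing through that auxiliary formula.
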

\begin{proof} Up to a linear change of coordinates, we suppose that the tangent cone of $B$ is $y=0$ and thus we have local coordinates $(x,v)$ around $\widetilde{p}$ given by $v=y/x$. Take
 a Puiseux parametrization  ${\gamma}(t) = (t^n,\phi(t))$ of
${B}$, where $n=\nu_0(B)$ and $\text{ord}_t(\phi(t))>n$. Then, a primitive parametrization of $\widetilde{B}$ is given by $\widetilde{\gamma}(t)=(t^n,\widetilde{\phi}(t)=\phi(t)/t^n)$. If $\mathcal F$ is given by $\omega=Pdx+Qdy=0$, then $\pi^*{\mathcal F}$ is locally given at $\widetilde{p}$ by
$$
\omega'=x^{-\nu_0({\mathcal F})}\{(P(x,xv)+vQ(x,xv))dx+xQ(x,xv)dv\}.
$$
We have
$$
p_{\widetilde{p}}(\pi^*{\mathcal F}, \widetilde{B})=\text{ord}_t\{P(\gamma(t))+\widetilde{\phi}(t)Q(\gamma(t))+bt^nQ(\gamma(t))\}-n\nu_0({\mathcal F}),
$$
where $b$ is generic. Since $B$ is invariant, we have that $nt^{n-1}P(\gamma(t))=-\phi'(t)Q(\gamma(t))$ and thus
$$
p_{\widetilde{p}}(\pi^*{\mathcal F}, \widetilde{B})= \text{ord}_t\{Q(\gamma(t))\}+\text{ord}_t\left\{\frac{-t\phi'(t)+n\phi(t)}{nt^{n}}+bt^n \right\}
-n\nu_0({\mathcal F}).$$
For $b$ generic, we have
$$
\text{ord}_t\left\{\frac{-t\phi'(t)+n\phi(t)}{nt^{n}}+bt^n \right\}=\nu_{\widetilde{p}} (\widetilde{B}).
$$
Moreover, noting that $nt^{n-1}P(\gamma(t))=-\phi'(t)Q(\gamma(t))$ and $\text{ord}_t(\phi(t))>n$ we have
$$
\text{ord}_t({P(\gamma(t))})>\text{ord}_t({Q(\gamma(t))})
$$
and hence $\text{ord}_t({Q(\gamma(t))})= p_0({\mathcal F},B)$. This ends the proof.
\end{proof}

{
\begin{obs}
If we consider a polar curve $P^{\mathcal F}_{(a:b)}$ for a dicritical foliation $\mathcal F$ and we define  $p_0(\cl{F},B)=i_0(P^{\mathcal F}_{(a:b)},B)$ for generic $(a:b)$,  we obtain
\begin{equation*}
 p_{\widetilde{p}}(\pi^*{\cl{F}},\widetilde{B}) =
 p_0(\cl{F},B) + \nu_{\widetilde{p}}(\widetilde{B})
- (\nu_0({\mathcal F})+1)\nu_0(B),
\end{equation*}
when the blow-up of the origin is dicritical for the foliation $\mathcal F$.
\end{obs}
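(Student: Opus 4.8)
The plan is to repeat, almost verbatim, the proof of Proposition~\ref{blowup}, isolating the single step where the non-dicriticalness of the blow-up was used. In that proof one performs a linear change of coordinates so that the tangent cone of $B$ is $y=0$, works in the chart $(x,v)$ with $v=y/x$ around $\widetilde{p}=\widetilde{B}\cap E$, and writes $\pi^{*}{\cl{F}}$ at $\widetilde{p}$ as
$$
\omega'=x^{-\nu_0(\cl{F})}\bigl\{(P(x,xv)+vQ(x,xv))\,dx+xQ(x,xv)\,dv\bigr\}.
$$
The exponent $\nu_0(\cl{F})$ is exactly the $x$-valuation of the common factor of the two coefficients of $\pi^{*}\omega$, and this is the only place where the hypothesis intervenes. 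Thus the one new step is to check that, when the blow-up of the origin is dicritical for $\cl{F}$, this common factor equals $x^{\nu_0(\cl{F})+1}$, so that $\pi^{*}{\cl{F}}$ is instead given at $\widetilde{p}$ by $\omega'=x^{-\nu_0(\cl{F})-1}\{(P(x,xv)+vQ(x,xv))\,dx+xQ(x,xv)\,dv\}$. (Linear changes do not affect the dicritical character of the blow-up, so this normalization is harmless.)

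For this step I would argue as follows. Since $E=(x=0)$ is not invariant by $\pi^{*}{\cl{F}}$, one has $\nu_0(P)=\nu_0(Q)=\nu_0(\cl{F})$ and the degree-$(\nu_0(\cl{F})+1)$ form $xP_{\nu_0(\cl{F})}+yQ_{\nu_0(\cl{F})}$, with $P_{\nu_0(\cl{F})},Q_{\nu_0(\cl{F})}$ the leading jets of $P,Q$, vanishes identically --- the standard local description of a dicritical first blow-up. In the chart this vanishing says precisely that the term of $x$-degree $\nu_0(\cl{F})$ in $P(x,xv)+vQ(x,xv)$ is zero, so the coefficient of $dx$ has $x$-valuation $\geq\nu_0(\cl{F})+1$, while the coefficient of $dv$ is $xQ(x,xv)$, of $x$-valuation exactly $\nu_0(Q)+1=\nu_0(\cl{F})+1$; hence the common factor is $x^{\nu_0(\cl{F})+1}$. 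Once $\omega'$ is normalized this way, the computation of Proposition~\ref{blowup} carries over unchanged: with the Puiseux parametrization $\gamma(t)=(t^{n},\phi(t))$ of $B$ (where $n=\nu_0(B)$, $\mathrm{ord}_t\phi>n$) and the induced parametrization $\widetilde{\gamma}(t)=(t^{n},\widetilde{\phi}(t)=\phi(t)/t^{n})$ of $\widetilde{B}$, a generic polar of $\pi^{*}{\cl{F}}$ gives
$$
p_{\widetilde{p}}(\pi^{*}{\cl{F}},\widetilde{B})=\mathrm{ord}_t\bigl\{P(\gamma(t))+\widetilde{\phi}(t)Q(\gamma(t))+bt^{n}Q(\gamma(t))\bigr\}-n(\nu_0(\cl{F})+1),
$$
and the invariance relation $nt^{n-1}P(\gamma(t))=-\phi'(t)Q(\gamma(t))$ lets one factor out $Q(\gamma(t))$ and evaluate the bracket to $\mathrm{ord}_t Q(\gamma(t))+\nu_{\widetilde{p}}(\widetilde{B})$ for generic $b$, exactly as there. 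Since $B$ is invariant one has $\mathrm{ord}_t P(\gamma(t))>\mathrm{ord}_t Q(\gamma(t))$, whence $\mathrm{ord}_t Q(\gamma(t))=\min\{i_0(B,P),i_0(B,Q)\}=p_0(\cl{F},B)$ with the generic-polar definition of $p_0$ in the dicritical setting; collecting terms yields the asserted identity.

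I do not anticipate any genuine difficulty: every step of the original proof used only the invariance of $B$, never that of $E$, so it transfers with no modification. The only point requiring care is the normalization of $\omega'$, that is the exact value $\nu_0(\cl{F})+1$ for the $x$-valuation of the common factor of $\pi^{*}\omega$ --- this is where the extra term $\nu_0(B)$ in the formula comes from --- but it is just the elementary local analysis of a dicritical blow-up recalled above.
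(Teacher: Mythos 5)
Your proof is correct and is exactly the argument the paper intends: the remark is stated without proof as an immediate adaptation of Proposition \ref{blowup}, and you correctly isolate the only point where dicriticality matters, namely that the common factor of the coefficients of $\pi^*\omega$ is $x^{\nu_0(\mathcal F)+1}$ rather than $x^{\nu_0(\mathcal F)}$, which accounts for the extra $\nu_0(B)$ in the formula. The rest of the computation indeed uses only the invariance of $B$, so it carries over verbatim.
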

}
\begin{obs}
\label{remarkblowup}
 {\rm Take a curve $\Gamma$ of generic polar type and consider the situation of Proposition \ref{blowup}. Denote by $\tilde\Gamma$ the strict transform of $\Gamma$ by $\pi$. We recall that
 $$
 p_0({\mathcal F},B)=i_0(\Gamma,B).
 $$
 By Noether's formula, we also have
 $$
 i_{\widetilde{p}}(\widetilde{\Gamma},\widetilde{B})=i_{0}(\Gamma,B)-\nu_0(\Gamma)\nu_0(B)= p_0({\mathcal F},B)-\nu_0({\mathcal F})\nu_0(B).
 $$
 We obtain that
 $
 p_{\widetilde{p}}(\pi^*{{\mathcal F}},\widetilde{B})-i_{\widetilde{p}}(\widetilde{\Gamma},\widetilde{B})=\nu_{\widetilde{p}}(\widetilde{B})$. In particular, the strict transform of a curve of generic polar type is not a curve of generic polar type for the transformed foliation.
 }\end{obs}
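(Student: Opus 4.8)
The plan is to obtain the claimed equality as a one-line consequence of combining Proposition~\ref{blowup} with Noether's formula, and then to extract the final (``in particular'') assertion from the strict positivity of the multiplicity $\nu_{\tilde p}(\tilde B)$.

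First I would assemble the two bookkeeping identities on which the computation rests. By the very definition of the polar intersection number we have $p_0(\cl F,B)=i_0(\Gamma,B)$; here one uses that $B\subset S_{\cl F}$, so that a curve $\Gamma$ of $S_{\cl F}$-generic polar type is automatically of $B$-generic polar type, as remarked after the definition of generic polar type. One also uses that, as noted in the remark following the definition of $p_0(\cl F,C)$, genericity forces $\nu_0(\Gamma)=\nu_0(\cl F)$. Substituting both into Noether's formula $i_{\tilde p}(\tilde\Gamma,\tilde B)=i_0(\Gamma,B)-\nu_0(\Gamma)\nu_0(B)$ yields $i_{\tilde p}(\tilde\Gamma,\tilde B)=p_0(\cl F,B)-\nu_0(\cl F)\nu_0(B)$, which is the middle displayed equality of the statement.

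Next I would simply subtract. Proposition~\ref{blowup} gives $p_{\tilde p}(\pi^*\cl F,\tilde B)=p_0(\cl F,B)+\nu_{\tilde p}(\tilde B)-\nu_0(\cl F)\nu_0(B)$. Subtracting the expression just found for $i_{\tilde p}(\tilde\Gamma,\tilde B)$, the terms $p_0(\cl F,B)$ and $\nu_0(\cl F)\nu_0(B)$ cancel, and we are left with $p_{\tilde p}(\pi^*\cl F,\tilde B)-i_{\tilde p}(\tilde\Gamma,\tilde B)=\nu_{\tilde p}(\tilde B)$, exactly as asserted.

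Finally, for the ``in particular'' statement I would argue by contradiction. The number $p_{\tilde p}(\pi^*\cl F,\tilde B)$ is, by definition, the intersection multiplicity at $\tilde p$ of $\tilde B$ with a curve of $\tilde B$-generic polar type for the transformed foliation $\pi^*\cl F$. Were $\tilde\Gamma$ itself such a generic polar curve, we would necessarily have $i_{\tilde p}(\tilde\Gamma,\tilde B)=p_{\tilde p}(\pi^*\cl F,\tilde B)$; but $\tilde B$ passes through $\tilde p$, whence $\nu_{\tilde p}(\tilde B)\ge 1$, and the identity just proved makes this equality impossible. Therefore $\tilde\Gamma$ is not a curve of generic polar type for $\pi^*\cl F$. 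There is no real obstacle here: the computation is immediate once Proposition~\ref{blowup} is available, and the only points demanding care are the two genericity identities used upstairs and the observation --- which is the whole content of the last assertion --- that the polar intersection number downstairs is computed with a fresh generic polar of $\pi^*\cl F$ rather than with the transform $\tilde\Gamma$.
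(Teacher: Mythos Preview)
Your argument is correct and follows exactly the same approach as the paper: you combine the definition $p_0(\cl F,B)=i_0(\Gamma,B)$ and the genericity identity $\nu_0(\Gamma)=\nu_0(\cl F)$ with Noether's formula, then subtract from Proposition~\ref{blowup}. Your justification of the ``in particular'' clause via the strict positivity of $\nu_{\tilde p}(\tilde B)$ is the intended reading and in fact spells out what the paper leaves implicit.
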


\begin{teo}
\label{inequality}
Consider a non-dicritical foliation $\cl{F}$ in $(\co^2,0)$ and an invariant curve
$C\subset S_{\mathcal F}$. For any
  branch $B\in {\mathcal B}(C)$, we have
\[ p_0({\cl{G}_f},B) \leq p_0({\cl{F}},B) ,\]
where $f=0$ is a reduced
equation of $C$. Moreover, the following statements are equivalent
\begin{enumerate}
\item There is $B\in {\mathcal B}(C)$ such that $p_0({\cl{G}_f},B)= p_0({\cl{F}},B)$.
\item The foliation $\mathcal F$ is of second type with $C=S_{\mathcal F}$.
\end{enumerate}
Finally, if $\mathcal F$ is of second type with $C=S_{\mathcal F}$, a separatrix $B\in {\mathcal B}(S_{\mathcal F})$ is of Briot and Bouquet type if and only if $p_0({\cl{G}_f},B)= p_0({\cl{F}},B)$.
\end{teo}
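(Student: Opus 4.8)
\noindent The plan is to prove, by induction on the number of blow-ups in the reduction of singularities of $\mathcal F$, the following sharpening of the statement, formulated for germs of non-dicritical foliations carrying the exceptional divisor produced so far (the theorem being the case of empty divisor, and ``second type'' and ``Briot and Bouquet type'' always understood relative to that divisor): for every branch $B\in\mathcal B(C)$ one has $p_0(\mathcal G_f,B)\le p_0(\mathcal F,B)$, with equality if and only if $\mathcal F$ is of second type, $C=S_{\mathcal F}$ and $B$ is of Briot and Bouquet type. This sharper form yields all three assertions at once: the inequality is immediate; (1)$\Leftrightarrow$(2) holds because one direction is the equality clause and, conversely, if $\mathcal F$ is of second type with $C=S_{\mathcal F}$ then by the Camacho--Sad theorem it has a Briot and Bouquet separatrix, necessarily a branch of $C$, which by the equality clause realizes equality; and the last assertion is exactly the equality clause under hypothesis (2). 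The first reduction is to pass to relative Milnor numbers: by Proposition~\ref{prop3}, for any non-dicritical $\mathcal H$ with $B$ a separatrix one has $p_0(\mathcal H,B)=\mu_0(\mathcal H,B)+\nu_0(B)-1$, so, using Remark~\ref{remark1-prop2} for $\mathcal G_f$, everything becomes a statement about $\mu_0(\mathcal G_f,B)\le\mu_0(\mathcal F,B)$.

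\noindent Next I would record the behaviour of $\mu_0(\,\cdot\,,B)$ under the blow-up $\sigma\colon(M,D)\to(\mathbb C^2,0)$ of the origin. Combining Proposition~\ref{blowup} with Proposition~\ref{prop3} applied at $0$ and at $\widetilde p=\widetilde B\cap D$ gives, for every non-dicritical $\mathcal H$,
\[
\mu_0(\mathcal H,B)=\mu_{\widetilde p}(\sigma^{*}\mathcal H,\widetilde B)+\bigl(\nu_0(\mathcal H)-1\bigr)\nu_0(B).
\]
Writing $f\circ\sigma=x^{\nu_0(C)}\widetilde f$ near $\widetilde p$, so that $\sigma^{*}\mathcal G_f$ is given by $(\nu_0(C)\widetilde f+x\widetilde f_x)\,dx+x\widetilde f_v\,dv=0$, one notes that this form and $d(x\widetilde f)$ differ by a multiple of $\widetilde f$, hence have the same order of contact with $\widetilde B\subset\{\widetilde f=0\}$; therefore $p_{\widetilde p}(\sigma^{*}\mathcal G_f,\widetilde B)=p_{\widetilde p}(\mathcal G_h,\widetilde B)$, where $h=0$ is a reduced equation of the invariant curve $\widetilde C\cup D$ (with $\widetilde C$ the strict transform of $C$), contained in the separatrix set of $\sigma^{*}\mathcal F$ at $\widetilde p$. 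Subtracting the two instances of the blow-up formula and using $\nu_0(\mathcal G_f)=\nu_0(C)-1$ yields the key identity
\[
\mu_0(\mathcal F,B)-\mu_0(\mathcal G_f,B)=\bigl(\mu_{\widetilde p}(\sigma^{*}\mathcal F,\widetilde B)-\mu_{\widetilde p}(\mathcal G_h,\widetilde B)\bigr)+\bigl(\nu_0(\mathcal F)-\nu_0(C)+1\bigr)\nu_0(B).
\]
The first summand is, again via Proposition~\ref{prop3}, the inequality of the sharpened statement for $(\sigma^{*}\mathcal F,\widetilde C\cup D,\widetilde B)$, hence $\ge 0$ by induction; the second is $\ge 0$ because $\nu_0(C)\le\nu_0(S_{\mathcal F})\le\nu_0(\mathcal F)+1$ by Theorem~\ref{teo:matteisalem}. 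This proves the inequality, and if equality holds then both summands vanish: from the second, $\nu_0(\mathcal F)=\nu_0(C)-1$, which squeezed between $\nu_0(C)\le\nu_0(S_{\mathcal F})\le\nu_0(\mathcal F)+1$ forces $\nu_0(C)=\nu_0(S_{\mathcal F})$ — so $C=S_{\mathcal F}$, multiplicity being additive over branches — and $\nu_0(S_{\mathcal F})=\nu_0(\mathcal F)+1$ — so $\mathcal F$ is of second type, by Theorem~\ref{teo:matteisalem}(2).

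\noindent It then remains to identify, under the hypothesis that $\mathcal F$ is of second type with $C=S_{\mathcal F}$, exactly which branches give equality. Under this hypothesis the second summand in the key identity is identically zero, so $\mu_0(\mathcal F,B)-\mu_0(\mathcal G_f,B)=\mu_{\widetilde p}(\sigma^{*}\mathcal F,\widetilde B)-\mu_{\widetilde p}(\mathcal G_h,\widetilde B)$; since $\sigma^{*}\mathcal F$ is still of second type, $\widetilde C\cup D$ is its full separatrix set near $\widetilde p$, and the trace point attached to $B$ equals that attached to $\widetilde B$, the Briot and Bouquet clause descends by induction. Its base case is the terminal situation where $\widetilde p$ is a simple trace point (the case of a nonsingular $\mathcal F$, where $\mathcal G_f=\mathcal F$, being trivial): choosing coordinates with $D=(x=0)$ and $\widetilde B=(y=0)$ one gets $\mu_{\widetilde p}(\mathcal G_{xy},\widetilde B)=1$, while by Example~\ref{saddle} $\mu_{\widetilde p}(\sigma^{*}\mathcal F,\widetilde B)$ equals $1$ when $\widetilde p$ is complex hyperbolic and $k+1>1$ when $\widetilde p$ is a well oriented saddle node (so that $\widetilde B$ is its weak separatrix) — no badly oriented saddle node arising, as $\mathcal F$ is of second type — so that equality holds precisely when $\widetilde p$, that is $\tau_E(B)$, is complex hyperbolic, which is precisely the Briot and Bouquet condition. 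I expect the step requiring most care to be the bookkeeping in this descent: keeping track of the divisor and checking that ``second type'' and ``Briot and Bouquet type'' are preserved by $\sigma$, and fitting the base case to the three local models of simple trace points of Section~2 so that equality propagates exactly along the branches landing on complex hyperbolic trace points; the short verification that $p_{\widetilde p}(\sigma^{*}\mathcal G_f,\widetilde B)=p_{\widetilde p}(\mathcal G_h,\widetilde B)$ is also to be handled carefully, since $\sigma^{*}\mathcal G_f$ is not itself the hamiltonian of $\widetilde C\cup D$.
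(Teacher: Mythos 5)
Your argument follows the same route as the paper's proof: descend along the branch $B$ by successive blow-ups, use Proposition~\ref{blowup} (together with Proposition~\ref{prop3} to pass to relative Milnor numbers) to write the polar excess of $B$ as the excess at the infinitely near point plus a correction term $(\nu_0(\mathcal F)-\nu_0(C)+1)\nu_0(B)$, control that term by Theorem~\ref{teo:matteisalem}, and finish with the local analysis at the trace point and the Camacho--Sad existence of a Briot and Bouquet separatrix. The substitution of $\sigma^*\mathcal G_f$ by the hamiltonian $\mathcal G_h$ of $\widetilde C\cup D$, justified by the computation you indicate, is a harmless variant of the paper's use of the transform of $\mathcal G_f$, and the individual identities you write are correct.

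The one genuine defect is the statement you propose to prove by induction. For a germ carrying a nonempty divisor, the equivalence ``equality if and only if second type, $C=S_{\mathcal F}$ and $B$ of Briot and Bouquet type'' is false: at a badly oriented saddle node of trace type, with $E=(x=0)$ the weak separatrix and $B=(y=0)$ the strong one, one has $\mu_0(\mathcal F,B)=1=\mu_0(\mathcal G_{xy},B)$, hence equality, while $\mathcal F$ is not of second type relative to $E$; and such germs do occur as intermediate stages of your descent whenever the original foliation is not of second type. Equivalently, the multiplicity characterization of second type in Theorem~\ref{teo:matteisalem}(2) is a statement at the origin of $({\mathbb C}^2,0)$ and does not survive verbatim at simple points on the divisor. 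So the induction, as announced, cannot close. Fortunately your actual deductions never use the false direction at intermediate levels: you invoke only the inequality there, you extract ``second type and $C=S_{\mathcal F}$'' from the vanishing of the correction term at the first blow-up via Theorem~\ref{teo:matteisalem} (where it does apply), and you run the terminal analysis only under the second type hypothesis, which excludes badly oriented saddle nodes. The proof is therefore repairable by demoting the induction statement to the inequality alone and treating the equality discussion exactly as you do in the body --- which is in effect what the paper does with its decreasing sequence $\Delta_0\geq\Delta_1\geq\cdots\geq\Delta_N\geq 0$.
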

Note that in Lemma 2.2. of  \cite{carnicer1994} it is proved that $\mu_0({\mathcal F},B) \geq \mu_0({\mathcal G},B)$ and hence the first statement is a consequence of Proposition \ref{prop3}. Anyway we give a complete proof below.

\begin{proof} Consider a sequence of local blow-ups $$\pi_k:(M_k,q_k)\rightarrow (M_{k-1},q_{k-1}), \quad k=1,2,\ldots,N$$ described as follows. First $(M_0,q_0)=({\mathbb C}^2,0)$ and $\pi_1$ is the blow-up of the origin. Next $\pi_k:(M_k,q_k)\rightarrow (M_{k-1},q_{k-1})$ is the blow-up centered at $q_{k-1}$, where $q_{k-1}$ is the only point in the strict transform $B_{k-1}\subset M_{k-1}$ of $B$, followed by the localization at the new ``infinitesimal near point'' $q_k$ of $B_k$. We denote by $E_k\subset M_k$ the total exceptional divisor, starting with $E_0=\emptyset$. Denote by $C_k\subset M_k$ the strict transform of $C$ in $M_k$ (localized around $q_k$) and by ${\mathcal F}_k$, respectively ${\mathcal G}_k$, the transforms of ${\mathcal F}$, respectively ${\mathcal G}_f$, at $M_k$. At the final step $N\geq 1$ we ask that $C$ is desingularized at $q_N$, that is $B_N=C_N$ locally at $q_N$ and $B_N$ has normal crossings with $E_N$ at $q_N$. In particular, there are local coordinates $(x,y)$ at $q_N$ such that $E_N=(x=0)$ and $B_N=(y=0)$. The existence of this sequence is an immediate consequence of the reduction of singularities of plane  curves.

Let us denote $\Delta_k=p_{q_k}({\mathcal F}_k, B_k)-p_{q_k}({\mathcal G}_k,B_k)$. Note that ${\mathcal G}_N$ is a generalized curve at $q_N$ with separatrices $xy=0$, that is, up to a coordinate change, it is given by a linear form $ydx+rxdy$ with $r\in {\mathbb Q}_{>0}$. In particular, we have $p_{q_N}({\mathcal G}_N,B_N)=1$. Moreover, $q_N$ is  a singular point for ${\mathcal F}_N$, since $xy=0$ are also invariant curves for ${\mathcal F}_N$. Hence $p_{q_N}({\mathcal F}_N,B_N)\geq 1$ and thus $\Delta_N\geq 0$. Let us show that
\begin{equation}
\label{eq:desigualdades}
\Delta_0\geq\Delta_1\geq \Delta_2\geq\cdots\geq \Delta_N\geq 0.
\end{equation}
Invoking Proposition \ref{blowup}, we have
\begin{equation}
\label{eq:deltas}
\Delta_{k-1}=\Delta_k+m_{k-1}(\nu_{q_{k-1}}({\mathcal F}_{k-1})- \nu_{q_{k-1}}({\mathcal G}_{k-1})),
\end{equation}
where $m_{k-1}=\nu_{q_{k-1}}(B_{k-1})\geq 1$. Since ${\mathcal G}_{k-1}$ is a generalized curve at $q_{k-1}$ whose separatrices are the irreducible components of $C_{k-1}\cup E_{k-1}$  and  $C_{k-1}\cup E_{k-1}\subset S_{{\mathcal F}_{k-1}}$, we have that
$$
\nu_{q_{k-1}}({\mathcal F}_{k-1})- \nu_{q_{k-1}}({\mathcal G}_{k-1})\geq 0.
$$
This already shows the statement of Equation \eqref{eq:desigualdades} and in particular $\Delta_0\geq 0$.

Now, assume $\Delta_0=0$. This implies that $\Delta_k=0$ for all $k=0,1,\ldots, N$.  In particular, by Equation \eqref{eq:deltas} applied to $k=1$ we deduce that $\nu_0({\mathcal F})=\nu_0({\mathcal G}_f)$, since $C\subset S_{\mathcal F}$, by Theorem \ref{teo:matteisalem}, this is only possible if $C=S_{\mathcal F}$ and $\mathcal F$ is of second type.

Conversely, assume that ${\mathcal F}$ is of second type with $C=S_{\mathcal F}$. Take a branch $B\in {\mathcal B}(S_{\mathcal F})$ and let us consider the local situation at $q_N$ where $B_N=(y=0)$ and $E_N=(x=0)$. Write a generator of ${\mathcal F}_N$ as
$
\omega_N= g(x,y)ydx+h(x,y)xdy
$. Then $p_{q_N}({\mathcal F}_N,B_N)\geq 1$ and $p_{q_N}({\mathcal F}_N,B_N)=1$ if and only if $h(0,0)\ne 0$, this is equivalent to say that $B$ is a Briot and Bouquet separatrix. That is we have shown that $B$ is a Briot and Bouquet separatrix if and only if $\Delta_N=0$. Moreover, at an intermediate point $q_k$ we have that both ${\mathcal F}_k$ and ${\mathcal G}_k$ are of second type with
$$
S_{{\mathcal F}_{k}}=S_{{\mathcal G}_k}=C_k\cup E_k.
$$
In particular $\nu_{q_{k-1}}({\mathcal F}_{k-1})- \nu_{q_{k-1}}({\mathcal G}_{k-1})=0$ and thus we have that
$$
\Delta_0=\Delta_1= \Delta_2=\cdots=\Delta_N\geq 0.
$$
Finally, note that by Camacho and Sad arguments in \cite{camacho1984} there is always a Briot and Bouquet separatrix. This ends the proof.
\end{proof}

\begin{cor}
\label{cor:positivity}
 We have $
 \Delta_0({\mathcal F},C)\geq 0
 $ for any
 non-dicritical foliation  $\cl{F}$ in $(\co^2,0)$ and any curve $C\subset S_{\mathcal F}$.
\end{cor}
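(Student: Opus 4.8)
The plan is to get the inequality branch by branch, with Theorem \ref{inequality} as the essential input. Let $f=0$ be a reduced equation of $C$ and let ${\mathcal G}_f$ be the hamiltonian foliation defined by $df=0$; it is a generalized curve with $S_{{\mathcal G}_f}=C$. By Definition \ref{def:exceso} we have $\Delta_0({\mathcal F},C)=p_0({\mathcal F},C)-p_0({\mathcal G}_f,C)$, so it suffices to prove that $p_0({\mathcal F},C)\geq p_0({\mathcal G}_f,C)$.

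The first step will be to observe that the polar intersection number with respect to $C$ is additive over the branches of $C$. Concretely: if ${\mathcal H}$ is a non-dicritical foliation with $C\subset S_{\mathcal H}$ and $\Gamma$ is a curve of $C$-generic polar type for ${\mathcal H}$, then, as observed just after the definition of the polar intersection number, $\Gamma$ is also of $B$-generic polar type for every branch $B\in{\mathcal B}(C)$; since $\Gamma$ and $C$ have no branch in common, additivity of the intersection multiplicity yields $p_0({\mathcal H},C)=i_0(\Gamma,C)=\sum_{B\in{\mathcal B}(C)}i_0(\Gamma,B)=\sum_{B\in{\mathcal B}(C)}p_0({\mathcal H},B)$.

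Applying this identity to ${\mathcal H}={\mathcal F}$ and to ${\mathcal H}={\mathcal G}_f$, and then using Theorem \ref{inequality} --- which asserts $p_0({\mathcal G}_f,B)\leq p_0({\mathcal F},B)$ for each $B\in{\mathcal B}(C)$ --- I would sum over $B\in{\mathcal B}(C)$ to obtain $p_0({\mathcal G}_f,C)\leq p_0({\mathcal F},C)$, that is, $\Delta_0({\mathcal F},C)\geq 0$. There is no genuinely hard step, since Theorem \ref{inequality} already carries the weight; the only point that deserves care is the additivity reduction, namely verifying that a single generic polar curve can be used simultaneously for $C$ and for all of its branches --- and this is exactly what the remark following the definition of the polar intersection number provides.
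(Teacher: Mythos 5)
Your proposal is correct and follows the paper's own argument exactly: the paper proves this corollary by invoking the additivity of polar intersection numbers over the branches of $C$ and applying Theorem \ref{inequality} to each branch $B\in{\mathcal B}(C)$, which is precisely what you do. The extra care you take in justifying that a single curve of $C$-generic polar type works simultaneously for all branches is a sound (if implicit in the paper) elaboration of the same proof.
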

\begin{proof}
It follows from the additivity of the polar intersection indices, by applying Theorem \ref{inequality} to all the branches $B\in {\mathcal B}(C)$.
\end{proof}

\begin{cor}
\label{cor:generalizada}
A non-dicritical  foliation  $\cl{F}$ in $(\co^2,0)$
is a generalized curve if and only if
$$
\Delta_0({\mathcal F},S_{\mathcal F})= 0.
$$  In this case we have
$
p_0({\mathcal F},S_{\mathcal F})=\mu_0(S_{\mathcal F})+\nu_0({S}_{\mathcal F})-1
$.
\end{cor}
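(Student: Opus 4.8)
The plan is to read this corollary off from the equality discussions already completed: it is essentially a repackaging of Theorem~\ref{inequality} and Remark~\ref{remark-prop2}, so no genuinely new computation is needed.

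For the implication ``generalized curve $\Rightarrow$ $\Delta_0=0$'', I would observe that a generalized curve is in particular of second type, so formula \eqref{eq-remark-prop2} applies and gives $\Delta_0({\mathcal F},S_{\mathcal F})=\mu_0({\mathcal F})-\mu_0(S_{\mathcal F})$; since ${\mathcal F}$ is a generalized curve, part~(3) of Theorem~\ref{teo:matteisalem} forces $\mu_0({\mathcal F})=\mu_0({\mathcal G}_f)=\mu_0(S_{\mathcal F})$, whence $\Delta_0({\mathcal F},S_{\mathcal F})=0$.

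For the converse, suppose $\Delta_0({\mathcal F},S_{\mathcal F})=0$. I would first use additivity of the polar intersection number over the branches of $S_{\mathcal F}$ --- which is immediate from $p_0({\mathcal F},C)=i_0(\Gamma,C)$ and additivity of intersection multiplicity --- to write $\Delta_0({\mathcal F},S_{\mathcal F})=\sum_{B\in{\mathcal B}(S_{\mathcal F})}\bigl(p_0({\mathcal F},B)-p_0({\mathcal G}_f,B)\bigr)$. Each summand is $\geq 0$ by Theorem~\ref{inequality}, so all of them vanish; applying the equivalence (1)$\Leftrightarrow$(2) of Theorem~\ref{inequality} (with $C=S_{\mathcal F}$, so that condition~(2) reduces to ``${\mathcal F}$ is of second type'') to any one branch, we get that ${\mathcal F}$ is of second type. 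Then \eqref{eq-remark-prop2} gives $0=\mu_0({\mathcal F})-\mu_0(S_{\mathcal F})$, and Theorem~\ref{teo:matteisalem}(3) concludes that ${\mathcal F}$ is a generalized curve. Finally, the displayed formula in the ``generalized curve'' case is nothing but the vanishing of $\Delta_0({\mathcal F},S_{\mathcal F})$ rewritten via its definition $\Delta_0({\mathcal F},C)=p_0({\mathcal F},C)-\mu_0(C)-\nu_0(C)+1$ with $C=S_{\mathcal F}$.

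Main obstacle: there is essentially none --- the content sits in Theorem~\ref{inequality} and Remark~\ref{remark-prop2}. The only step that deserves a line of care is the passage from ``$\Delta_0=0$'' to ``${\mathcal F}$ is of second type'': one must pass through additivity to isolate a single branch before invoking Theorem~\ref{inequality}, rather than expecting a global statement directly. An alternative, avoiding Remark~\ref{remark-prop2}, is to use the ``Finally'' clause of Theorem~\ref{inequality}: vanishing of all branch contributions says every separatrix is of Briot and Bouquet type, which together with second type (no badly oriented saddle-nodes, complex hyperbolic corners) eliminates all saddle-nodes; I would keep the shorter Milnor-number argument as the main route.
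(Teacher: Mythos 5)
Your proof is correct, and its skeleton --- additivity of the polar intersection number over the branches of $S_{\mathcal F}$ combined with the branch-wise non-negativity and rigidity of Theorem \ref{inequality} --- is exactly the paper's. Where you genuinely diverge is in how the vanishing is converted into ``generalized curve'': the paper invokes the final clause of Theorem \ref{inequality} (branch-wise equality forces $\mathcal F$ to be of second type \emph{and} every separatrix to be of Briot and Bouquet type, which together eliminate all saddle-nodes), whereas you route the conclusion through the Milnor number, using Equation \eqref{eq-remark-prop2} to turn $\Delta_0({\mathcal F},S_{\mathcal F})=0$ into $\mu_0({\mathcal F})=\mu_0(S_{\mathcal F})$ and then quoting part (3) of Theorem \ref{teo:matteisalem}. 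Both closures are legitimate: yours leans on the Camacho--Lins Neto--Sad/Mattei--Salem minimality of the Milnor number (so it reuses Proposition \ref{prop2} through Remark \ref{remark-prop2}), while the paper's stays inside the polar analysis of Theorem \ref{inequality} and is marginally more self-contained. For the final displayed identity you simply unfold Definition \ref{def:exceso}, which is more direct than the paper's citation of Proposition \ref{prop2}; the two are equivalent once one knows $\nu_0({\mathcal F})=\nu_0(S_{\mathcal F})-1$ for second type foliations. The one step you rightly flag --- isolating a single branch before applying the equivalence (1)$\Leftrightarrow$(2) of Theorem \ref{inequality} --- implicitly uses that $\mathrm{Sep}({\mathcal F})$ is nonempty, which the Camacho--Sad theorem guarantees.
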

\begin{proof} In view of Theorem \ref{inequality} and the additivity of the polar intersection numbers the stated equality is equivalent to say that
$$
p_0({\mathcal G}_f,B)= p_0({\mathcal F},B)
$$
for any separatrix $B\in \text{Sep}({\mathcal F})$. This is also equivalent to say that ${\mathcal F}$ is of second type and all the separatrices are of Briot and Bouquet, what is the same to say that $\mathcal F$ is a generalized curve. The second part is a consequence of Proposition \ref{prop2} (see also \cite{corral2003}).
\end{proof}

\subsection{Computation of GSV-index} Let us recall the definition in \cite{brunella1997II} of the GSV-index.
Let $C$ be a germ of (convergent) curve invariant by a foliation $\cl{F}$ in $(\co^2,0)$. Take a reduced equation $f=0$ of $C$ for $f\in {\mathbb C}\{x,y\}$ and a 1-form $\omega$ that defines $\mathcal F$. There is a decomposition
\begin{equation}
\label{decomp}
 g \omega = k df + f \eta ,\quad g,k\in {\mathbb C}\{x,y\},
\end{equation}
where $\eta$ is a holomorphic 1-form.

\begin{ddef}{\rm
 The $GSV$-{\em index} of $\cl{F}$ with
respect to $C$ at  the origin is defined by
\[ GSV_0(\cl{F},C) = \frac{1}{2 \pi i} \int_{\partial C} \frac{g}{k} d \left(\frac{k}{g} \right).\]
Here $\partial C = C \cap S^3_\epsilon$, where $S^3_\epsilon$ is a
small sphere centered at $0 \in \co^2$, oriented as the boundary
of $C \cap B^4_\epsilon$, for  a ball $B^4_\epsilon$ such that $S^3_\epsilon = \partial B^4_\epsilon$.
\label{GSVdefinition}
}
\end{ddef}

Next Lemma \ref{lem:unionexcesos} remakes the behavior of the CSV-index for the union of two sets of separatrices (see \cite{brunella1997II}, section 3).
\begin{lem}
 \label{lem:unionexcesos}
 Let $\mathcal F$ be a non-dicritical foliation and consider two curves $C_1, C_2\subset S_{\mathcal F}$ without common branches. Then
\[ \Delta_0(\cl{F},C_1 \cup C_2) =  \Delta_0(\cl{F},C_1) + \Delta_0(\cl{F},C_2)
- 2 i_0(C_1,C_2) .\]
\end{lem}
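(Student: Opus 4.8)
The plan is to split the identity into three elementary additivity facts and combine them by a short bookkeeping computation. By Definition~\ref{def:exceso},
\[
\Delta_0(\mathcal{F},C) = p_0(\mathcal{F},C) - \mu_0(C) - \nu_0(C) + 1,
\]
so it suffices to prove three things about curves $C_1,C_2\subset S_{\mathcal F}$ sharing no branch: \textbf{(i)} $p_0(\mathcal{F},C_1\cup C_2) = p_0(\mathcal{F},C_1) + p_0(\mathcal{F},C_2)$; \textbf{(ii)} $\nu_0(C_1\cup C_2) = \nu_0(C_1) + \nu_0(C_2)$; and \textbf{(iii)} $\mu_0(C_1\cup C_2) = \mu_0(C_1) + \mu_0(C_2) + 2\,i_0(C_1,C_2) - 1$.

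For (i), I would fix a formal curve $\Gamma$ of $(C_1\cup C_2)$-generic polar type; since $C_i\subset C_1\cup C_2$, such a $\Gamma$ is automatically of $C_i$-generic polar type for $i=1,2$, by the compatibility of generic polar types recalled just after the definition of $p_0$ in Section~3. Then additivity of the intersection multiplicity in one variable gives $p_0(\mathcal{F},C_1\cup C_2)=i_0(\Gamma,C_1\cup C_2)=i_0(\Gamma,C_1)+i_0(\Gamma,C_2)=p_0(\mathcal{F},C_1)+p_0(\mathcal{F},C_2)$. Fact (ii) is immediate, since a reduced equation of $C_1\cup C_2$ is the product $f_1 f_2$ of reduced equations of $C_1$ and $C_2$ (no common branch), and the order of a product is the sum of the orders.

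Fact (iii) is the only non-formal input: it is the classical behaviour of the Milnor number of a plane curve under union. I would derive it from $\mu_0(C)=2\delta_0(C)-r(C)+1$ together with $\delta_0(C_1\cup C_2)=\delta_0(C_1)+\delta_0(C_2)+i_0(C_1,C_2)$ and $r(C_1\cup C_2)=r(C_1)+r(C_2)$, where $\delta_0$ is the delta invariant and $r$ the number of branches; a direct substitution yields (iii). Alternatively, (iii) follows from Remark~\ref{remark1-prop2}, which gives $p_0(\mathcal{G}_h,(h=0))=\mu_0((h=0))+\nu_0((h=0))-1$ for any reduced $h$: applying this to $f_1$, $f_2$ and $f_1 f_2$, and using (i) for the hamiltonian foliations together with (ii), converts the assertion into (iii).

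Finally I would combine:
\begin{align*}
\Delta_0(\mathcal{F},C_1\cup C_2)
&= p_0(\mathcal{F},C_1\cup C_2) - \mu_0(C_1\cup C_2) - \nu_0(C_1\cup C_2) + 1\\
&= \bigl[p_0(\mathcal{F},C_1)+p_0(\mathcal{F},C_2)\bigr] - \bigl[\mu_0(C_1)+\mu_0(C_2)+2\,i_0(C_1,C_2)-1\bigr]\\
&\qquad - \bigl[\nu_0(C_1)+\nu_0(C_2)\bigr] + 1\\
&= \Delta_0(\mathcal{F},C_1) + \Delta_0(\mathcal{F},C_2) - 2\,i_0(C_1,C_2).
\end{align*}
The main obstacle is isolating (iii): it is the sole step that is not a one-line additivity remark, and it is precisely where the constant $-1$ and the factor $2$ in the final formula come from. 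The hypothesis that $C_1$ and $C_2$ have no common branch is exactly what makes $f_1 f_2$ reduced and what reduces the intersection cross-terms to the single quantity $i_0(C_1,C_2)$.
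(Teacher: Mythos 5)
Your proof is correct and follows essentially the same route as the paper: both reduce the identity, via additivity of $p_0$ and of $\nu_0$, to the classical formula $\mu_0(C_1\cup C_2)=\mu_0(C_1)+\mu_0(C_2)+2\,i_0(C_1,C_2)-1$, which the paper simply cites from Casas-Alvero (Prop.\ 6.4.4) and you derive from $\mu=2\delta-r+1$. Your parenthetical alternative derivation of (iii) via Remark~\ref{remark1-prop2} would need the extra observation that $p_0(\mathcal{G}_{f_1f_2},C_i)=p_0(\mathcal{G}_{f_i},C_i)+i_0(C_1,C_2)$, but since it is only offered as an aside this does not affect the correctness of the argument.
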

\begin{proof} In view of the definition of the polar excess and the additivity of the multiplicity of a curve, the statement is equivalent to say that
$$
\mu_0(C_1\cup C_2)=\mu_0(C_1)+\mu_0(C_2)+2i_0(C_1,C_2)-1.
$$
This is a classical property of Milnor number of curves (see \cite{casas}, Prop. 6.4.4).
\end{proof}

\begin{prop}
\label{gsv}
Let $\cl{F}$ be a non-dicritical foliation in $(\co^2,0)$ and
$C\subset S_{\mathcal F}$ be a curve union of convergent separatrices of $\cl{F}$. Then
\[ GSV_0(\cl{F},C) = \Delta_0({\cl{F}},C).\]
\end{prop}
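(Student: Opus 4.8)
The plan is to reduce the statement $GSV_0(\cl{F},C)=\Delta_0(\cl{F},C)$ to two independent facts: first, that both sides of the equality satisfy the same additivity rule when one decomposes $C$ into branches (or into two curves without common components); and second, that the equality holds in the base case of a single branch, together with the case of a generalized curve where both indices vanish. For the additivity of the GSV-index, one recalls from \cite{brunella1997II} that $GSV_0(\cl{F},C_1\cup C_2)=GSV_0(\cl{F},C_1)+GSV_0(\cl{F},C_2)-2\,i_0(C_1,C_2)$; this matches exactly the formula for $\Delta_0$ proved in Lemma \ref{lem:unionexcesos}. Hence it suffices to establish $GSV_0(\cl{F},B)=\Delta_0(\cl{F},B)$ for each irreducible convergent separatrix $B$ of $\cl{F}$.

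For a single branch $B$, I would compute both indices from a Puiseux parametrization $\gamma(t)=(x(t),y(t))$ of $B$. On the one hand, Proposition \ref{prop3} gives $p_0(\cl{F},B)=\mu_0(\cl{F},B)+\nu_0(B)-1$, so by Definition \ref{def:exceso} (with $\mathcal{G}=\mathcal{G}_f$ for a reduced equation $f$ of $B$, using $\mu_0(B)=\mu_0(\mathcal{G}_f)$ and $\nu_0(B)=\nu_0(\mathcal{G}_f)+1$) we get
\[
\Delta_0(\cl{F},B)=\bigl(\mu_0(\cl{F},B)+\nu_0(B)-1\bigr)-\bigl(\mu_0(B)+\nu_0(B)-1\bigr)=\mu_0(\cl{F},B)-\mu_0(B).
\]
On the other hand, using the decomposition $g\omega=k\,df+f\eta$ from Equation \eqref{decomp} and pulling back along $\gamma$ (so that $f(\gamma(t))\equiv 0$ and $\gamma^*(g\omega)=k(\gamma(t))\,\gamma^*(df)=k(\gamma(t))\,d(f\circ\gamma)$... which vanishes, forcing a more careful bookkeeping with orders of vanishing of $g$, $k$ and $df$ along $B$), the integral defining $GSV_0(\cl{F},B)$ is a residue that evaluates, by a standard argument, to $\mathrm{ord}_t k(\gamma(t))-\mathrm{ord}_t g(\gamma(t))$. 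The task is then to identify this difference of orders with $\mu_0(\cl{F},B)-\mu_0(B)$. This is done by comparing, along $B$, the vector field ${\bf v}$ defining $\cl{F}$ with the hamiltonian vector field of $f$: from $g\omega = k\,df + f\eta$ one reads that $k$ measures the ``ratio'' between $\cl{F}$ and $\mathcal{G}_f$ on the separatrix, while $g$ absorbs the common factor, and $\mathrm{ord}_t(k\circ\gamma)$, $\mathrm{ord}_t(g\circ\gamma)$ are exactly the two quantities appearing in formula \eqref{multiplicidaderelativa1} for $\mu_0(\cl{F},B)$ and for $\mu_0(\mathcal{G}_f,B)=\mu_0(B)$ respectively. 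Alternatively, and perhaps more cleanly, I would invoke the localization/blow-up behavior: both $GSV_0$ and $\Delta_0$ transform under the blow-up of Proposition \ref{blowup} by the same correction term (for $\Delta_0$ this is Equation \eqref{eq:deltas} combined with Remark \ref{remarkblowup}; for $GSV_0$ it is Brunella's blow-up formula), so one can run the same induction on the number $N$ of blow-ups used in the proof of Theorem \ref{inequality}, the base case being a simple point where a direct computation gives both indices.

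The main obstacle will be the honest computation of the residue integral $\frac{1}{2\pi i}\int_{\partial C}\frac{g}{k}\,d(\frac{k}{g})$ and its identification with the difference of vanishing orders $\mathrm{ord}_t(k\circ\gamma)-\mathrm{ord}_t(g\circ\gamma)$: one must check that $g$ and $k$ do not vanish identically on $B$ (i.e.\ that $f\nmid g$ and $f\nmid k$, which follows from $P,Q$ being relatively prime and $f$ reduced and invariant), that the integrand extends meromorphically across $\partial B$ with the right residue, and that the parametrization-dependent orders are well defined. Once this residue formula is in hand, the matching with $\mu_0(\cl{F},B)-\mu_0(B)$ via Equation \eqref{multiplicidaderelativa1} is essentially bookkeeping. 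I would structure the final write-up so that the additivity reduction is stated first, then the branch computation, citing \cite{brunella1997II} for the analytic input on the residue and Proposition \ref{prop3} for the polar side.
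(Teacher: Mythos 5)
Your proposal is correct and follows essentially the same route as the paper: reduce to a single branch via the common additivity rule (Lemma \ref{lem:unionexcesos} and its $GSV$ counterpart), then evaluate the residue along a Puiseux parametrization $\gamma$ as $\mathrm{ord}_t\big((k/g)\circ\gamma\big)$ using the decomposition $g\omega = k\,df + f\eta$. The only cosmetic difference is that you convert this order into $\mu_0(\cl{F},B)-\mu_0(B)$ via Equation \eqref{multiplicidaderelativa1} and Proposition \ref{prop3}, whereas the paper compares the generic polars $aP+bQ$ and $af_x+bf_y$ directly to obtain $p_0(\cl{F},B)-p_0(\cl{G}_f,B)=\Delta_0(\cl{F},B)$; the two bookkeepings are equivalent.
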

\begin{proof} It is enough to consider the case that $C$ is a single separatrix. The general case follows from Lemma \ref{lem:unionexcesos} and the similar statement for the GSV-indices.

Take a reduced equation $f=0$ for $C$ and $\gamma = \gamma(t)$ a Puiseux parametrization.
For a decomposition $ g \omega = k df + f \eta$ such as in \eqref{decomp}, we have
$$
GSV_0(\cl{F},C)  = \frac{1}{2 \pi i} \int_{\partial C} \frac{g}{k} d \left( \frac{k}{g} \right)
   = \frac{1}{2 \pi i} \int_{\partial \mathbb{D}_{\epsilon}}
  \gamma^* \left(\frac{g}{k} d  \left(\frac{ k}{g} \right) \right)
   =  {\rm ord}_{t} {\big (} (k / g) \circ \gamma {\big )},
$$
where $\partial \mathbb{D}_{\epsilon}$ is a small circle around $0 \in \co$.
But, if $(a:b) \in \pcn{1}$, we get from \eqref{decomp} that
\[ a P + b Q = \left(\frac{k}{g} \right) ( a f_x + b f_y) + \frac{f}{g} h  \]
for some $h \in{\mathbb C}\{x,y\}$.
This gives
\[ {\rm ord}_{t} {\big (} (a P + b Q)\circ \gamma {\big )}=
 {\rm ord}_{t} {\big (} (k / g) \circ \gamma {\big )} +  {\rm ord}_{t}  {\big (} ( a f_x + b f_y)\circ \gamma {\big )} .\]
Hence
\[ GSV_0(\cl{F},C)  =
{\rm ord}_{t} {\big (} (a P + b Q)\circ \gamma {\big )} - {\rm
ord}_{t}  {\big (} ( a f_x + b f_y)\circ \gamma {\big )} =
\Delta_0(\cl{F},C).\]
\end{proof}

\begin{obs} In view of  this interpretation of the $GSV$-index in terms of polar intersection numbers, Corollary \ref{cor:generalizada} says that a non-dicritical foliation
 $\cl{F}$  in $(\co^2,0)$ is a generalized curve if and only if $S_{\mathcal F}$ is  convergent and
 $GSV_0(\cl{F},S_{\mathcal F}) = 0$. This
characterization of non-dicritical generalized curves was already
known: its necessity has been proved in \cite{brunella1997II},
whereas its  sufficiency in \cite{lehmann2001}.
\end{obs}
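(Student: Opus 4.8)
The plan is to read this statement not as an independent theorem but as the translation of the already-established polar characterization (Corollary~\ref{cor:generalizada}) into the language of the $GSV$-index, using the identity of Proposition~\ref{gsv}. The only content beyond those two results is the careful bookkeeping of the convergence hypothesis on $S_{\mathcal F}$, which is forced on us because $GSV_0(\cl{F},C)$ in Definition~\ref{GSVdefinition} is defined only when $C$ is a convergent curve, whereas $\Delta_0(\cl{F},C)$ and Corollary~\ref{cor:generalizada} live happily in the formal world.

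First I would treat the direct implication. Assuming $\cl{F}$ is a generalized curve, every singularity of $\pi^*\cl{F}$ in the minimal reduction of singularities is of complex hyperbolic type; in particular each trace point $\tau_E(B)$ attached to a separatrix $B\in\text{Sep}(\cl{F})$ is a complex hyperbolic trace singularity. By the discussion preceding the definition of generalized curve, this makes every such $B$ a strong (Briot and Bouquet) separatrix, hence convergent by the classical Briot and Bouquet Theorem. Therefore $S_{\mathcal F}$ is a convergent curve and $GSV_0(\cl{F},S_{\mathcal F})$ is well defined. Proposition~\ref{gsv} then applies and gives $GSV_0(\cl{F},S_{\mathcal F})=\Delta_0(\cl{F},S_{\mathcal F})$, while Corollary~\ref{cor:generalizada} gives $\Delta_0(\cl{F},S_{\mathcal F})=0$; combining these, $GSV_0(\cl{F},S_{\mathcal F})=0$.

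For the converse I would start from the two stated hypotheses, $S_{\mathcal F}$ convergent and $GSV_0(\cl{F},S_{\mathcal F})=0$. Convergence of $S_{\mathcal F}$ is exactly what is needed to invoke Proposition~\ref{gsv}, which yields $\Delta_0(\cl{F},S_{\mathcal F})=GSV_0(\cl{F},S_{\mathcal F})=0$. Corollary~\ref{cor:generalizada} then closes the argument and concludes that $\cl{F}$ is a generalized curve.

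The step I expect to require the most care is the automatic convergence of $S_{\mathcal F}$ in the direct implication: one must make sure that being a generalized curve rules out purely formal separatrices, so that passing from $\Delta_0$ to $GSV_0$ is legitimate. This is not a polar-curve statement at all but a consequence of the classification of simple singularities, namely that a complex hyperbolic trace point carries a convergent (Briot and Bouquet) separatrix; everything else is a formal substitution of Proposition~\ref{gsv} into Corollary~\ref{cor:generalizada}. Finally, for context I would observe that the resulting equivalence recovers the previously known characterization, whose necessity was proved in \cite{brunella1997II} and whose sufficiency was proved in \cite{lehmann2001}.
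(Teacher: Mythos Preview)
Your reading is correct and matches the paper's own: this remark is not given a separate proof but is presented exactly as the translation of Corollary~\ref{cor:generalizada} through the identity $GSV_0(\cl{F},C)=\Delta_0(\cl{F},C)$ of Proposition~\ref{gsv}, with the convergence clause present precisely because $GSV_0$ is only defined for convergent curves. Your additional observation that convergence of $S_{\mathcal F}$ is automatic for a generalized curve (all separatrices being of Briot and Bouquet type) is a welcome clarification that the paper leaves implicit.
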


\begin{obs} The $GSV$-index of a foliation $\mathcal F$ in $({\mathbb C}^2,0)$ with respect to a (convergent) curve $C \subset S_{\mathcal F}$ is also equal to
$$
GSV_0({\mathcal F},C)=\dim_{\mathbb C} \frac{{\mathbb C}\{x,y\}}{(f,P,Q)} - \dim_{\mathbb C} \frac{{\mathbb C}\{x,y\}}{(f,f_x,f_y)}
$$
(see \cite{gomezmont1998,licanic}).
Thus, from Proposition~\ref{gsv} and Definition~\ref{def:exceso}, we get that
$$
p_0({\mathcal F},C)  -
\dim_{\mathbb C} \frac{{\mathbb C}\{x,y\}}{(f,P,Q)} = \mu_0(C) - \tau_0(C) +\nu_0(C) -1
$$
where  $\tau_0(C)=\dim_{\mathbb C} \frac{{\mathbb C}\{x,y\}}{(f,f_x,f_y)}$  is the Tjurina number of $C$.
If $S_{\mathcal F}$ is convergent and we define the {\em Tjurina number} $\tau_0({\mathcal F})$ of the foliation $\mathcal F$ as
$$
\tau_0({\mathcal F})=\dim_{\mathbb C} \frac{{\mathbb C}\{x,y\}}{(f,P,Q)}
$$
with $f=0$ an equation of $S_{\mathcal F}$, we obtain that
$$
GSV_0({\mathcal F},S_{\mathcal F})=\tau_0({\mathcal F}) - \tau_0(S_{\mathcal F}).
$$
Hence, we have that  $\mathcal F$ is a generalized curve if and only if $\tau_0({\mathcal F})=\tau_0(S_{\mathcal F})$.
\end{obs}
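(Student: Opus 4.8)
The plan is to take the colength expression for the $GSV$-index as the cited input from \cite{gomezmont1998,licanic} and to obtain every remaining assertion of the statement by elementary bookkeeping from results already established, namely Proposition~\ref{gsv}, Definition~\ref{def:exceso} and Corollary~\ref{cor:generalizada}. First I would record the two independent expressions for the index on a convergent $C\subset S_{\mathcal F}$. On one side, Proposition~\ref{gsv} gives $GSV_0(\mathcal F,C)=\Delta_0(\mathcal F,C)$, and Definition~\ref{def:exceso} expands this as
$$
GSV_0(\mathcal F,C)=p_0(\mathcal F,C)-\mu_0(C)-\nu_0(C)+1.
$$
On the other side stands the cited colength formula
$$
GSV_0(\mathcal F,C)=\dim_{\mathbb C}\frac{{\mathbb C}\{x,y\}}{(f,P,Q)}-\dim_{\mathbb C}\frac{{\mathbb C}\{x,y\}}{(f,f_x,f_y)},
$$
whose second summand is by definition the Tjurina number $\tau_0(C)$.

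Next I would equate the two right-hand sides and transpose the colength and $\tau_0(C)$ terms to opposite sides, which yields the intermediate identity
$$
p_0(\mathcal F,C)-\dim_{\mathbb C}\frac{{\mathbb C}\{x,y\}}{(f,P,Q)}=\mu_0(C)-\tau_0(C)+\nu_0(C)-1.
$$
I would then specialise to $C=S_{\mathcal F}$, assumed convergent, and adopt the definition $\tau_0(\mathcal F)=\dim_{\mathbb C}\frac{{\mathbb C}\{x,y\}}{(f,P,Q)}$ with $f=0$ a reduced equation of $S_{\mathcal F}$. Feeding this directly into the colength formula gives $GSV_0(\mathcal F,S_{\mathcal F})=\tau_0(\mathcal F)-\tau_0(S_{\mathcal F})$.

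For the closing equivalence I would combine Corollary~\ref{cor:generalizada}, which characterises generalized curves by $\Delta_0(\mathcal F,S_{\mathcal F})=0$, with Proposition~\ref{gsv} to rewrite that vanishing as $GSV_0(\mathcal F,S_{\mathcal F})=0$; by the identity just obtained this is precisely $\tau_0(\mathcal F)=\tau_0(S_{\mathcal F})$. I would note, on one side of this equivalence, that a generalized curve automatically has convergent separatrices, so the convergence hypothesis needed to define $\tau_0(\mathcal F)$ and $\tau_0(S_{\mathcal F})$ is no genuine restriction there.

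The only substantive ingredient is the cited colength expression for $GSV_0$; everything after it is pure substitution. Accordingly, the main obstacle, were one to aim for a self-contained argument, would be to reconcile that algebraic formula with the residue/linking-number definition in Definition~\ref{GSVdefinition}, matching orientation and sign conventions so that the difference of dimensions emerges with the correct sign. The single point deserving a word of care is the finiteness of $\dim_{\mathbb C}\frac{{\mathbb C}\{x,y\}}{(f,P,Q)}$, which holds because $P$ and $Q$ share no common factor, so $(P,Q)$ is $\mathfrak m$-primary and hence so is the larger ideal $(f,P,Q)$.
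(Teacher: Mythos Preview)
Your proposal is correct and follows exactly the route the paper indicates: the remark itself has no separate proof and merely signals that the displayed identities are obtained by combining the cited colength formula for $GSV_0$ with Proposition~\ref{gsv}, Definition~\ref{def:exceso} and Corollary~\ref{cor:generalizada}, which is precisely the bookkeeping you carry out. Your added observations on convergence of separatrices for generalized curves and on finiteness of $\dim_{\mathbb C}{\mathbb C}\{x,y\}/(f,P,Q)$ are apt side remarks that the paper leaves implicit.
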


\section{The Poincar\'e problem and polar multiplicities}
The notion of polar curve of a foliation has a global counterpart.
Let  $\cl{F}$ be a foliation on $\pcn{2}$ with  singular set $\sing(\cl{F})$ and
degree $\deg(\cl{F})$.
Given $q \in \pcn{2}$, the {\em polar curve} of $\cl{F}$ with center $q$ is the closure of
the set of tangencies of $\cl{F}$ with the lines containing $q$. That is:
\[ P_{q}^{\cl{F}}  =  \overline{ \{ p \in \pcn{2} \ba \sing(\cl F);\
 q \in T_p^\pe
  \cl{F}  \} },\]
where $T_{p}^{\pe}{ \cl F}$ is the line through $p$ with direction
$T_{p}\cl{F}$. This is a curve of degree $\deg(\cl{F}) + 1$, except
for the unique degenerate case where $\cl{F}$ is the radial foliation
centered at $q$ (see \cite{mol2010}). This curve contains all points
in $\sing(\cl{F})$, as well as the center $q$. As long as $q$ varies
through $\pcn{2}$, the curves $P_{q}^{\cl{F}}$ form a two-dimensional
linear system, the {\em polar net} of $\cl{F}$. Its base locus is
precisely $\sing(\cl{F})$.

A foliation $\cl{F}$ on $\pcn{2}$ is induced in affine coordinates $\co^2$ by a polynomial
1-form $\omega = P(x,y) dx + Q(x,y) dy$.
The curves with equation $aP(x,y) + b Q(x,y) = 0$, where $(a:b) \in \pcn{1}$, are polar curves
$P_{q}^{\cl{F}}$ with $q \in L_\infty=\pcn{1}$, where $L_\infty$ denotes the line at infinity with respect
 to these affine coordinates. Thus the local polar curves
 can be seen as germs of global polar curves.

 In this Section, we revisit the known proofs of the degree bound for Poincar\'{e} problem in terms of polar curves.
 Namely, we will show that if an algebraic curve $S$ is invariant by a foliation $\cl{F}$ on $\pcn{2}$ then $\deg(S) \leq \deg(\cl{F}) + 2$ in the following cases:
 \begin{enumerate}
 \item[(a)] The singularities of $\cl{F}$ over $S$  are non-dicritical (Carnicer's paper \cite{carnicer1994}).
\item[(b)] The curve $S$ has at most
nodal singularities (Cerveau-Lins Neto's paper \cite{cerveau1991}).
\end{enumerate}

Let $S\subset {\mathbb P}_{\mathbb C}^2$ be a projective curve of degree $d$ invariant by a singular foliation $\cl{F}$ of ${\mathbb P}_{\mathbb C}^2$.
Choose  a line at infinity $L_\infty$ transversal to $S$ that avoids $\sing(\cl{F})$ and $\sing(S)$. Consider affine coordinates in ${\mathbb C}^2={\mathbb P}_{\mathbb C}^2\setminus L_{\infty}$ and fix an affine reduced polynomial equation $f(x,y) = 0$ for $S\setminus L_{\infty}$. Let $\mathcal G$ be the foliation on ${\mathbb P}_{\mathbb C}^2$ defined by $df=0$ in the affine part.
The foliation $\mathcal G$ is the {\em foliation of reference} in \cite{carnicer1994}.

Notice that $\deg(\cl{G}) = d -1 $ and  $L_{\infty}$ is $\cl{G}$-invariant.
The only singularities of
${\mathcal G}$ on $L_{\infty}$ are the $d$ points in $S \cap
L_\infty$. In all such points ${\mathcal G}$ is analytically equivalent to a radial foliation. To see this, it is enough to observe that $\mathcal G$ has the rational first integral
$
F(X,Y,Z)/Z^d
$, where $X,Y,Z$ are homogeneous coordinates and $F$ is the homogeneous polynomial of degree $d$ such that $F(x,y,1)=f(x,y)$.

 Given $q \in \pcn{2}$, let us denote  $\Gamma_q=P^{\mathcal F}_q$ and $\Sigma_q=P^{{\mathcal G}}_q$. There is a nonempty Zariski open set $U\subset {\mathbb P}_{\mathbb C}^2$ such that the following properties hold for any $q\in U$:
 \begin{enumerate}
 \item For any $r\in S\cap L_{\infty}$ we have that $r\notin \Gamma_q$ and $\Sigma_q$ is non singular and transversal to $S$ and $L_{\infty}$ in $r$. In particular $i_r(\Sigma_q,S)=1$ and $i_r(\Gamma_q,S)=0$.
 \item For any $r\in S\setminus (L_{\infty}\cup \sing(\cl{F}))$ we have that either $r\notin \Gamma_q\cup \Sigma_q$ or $r\in \Gamma_q\cap \Sigma_q$ and then $i_r(\Gamma_q,S)=i_r(\Sigma_q,S)=1$.
 \item For any $r\in S\setminus (L_{\infty}\cup \text{Sing}(S))$ we have that either $i_r(\Sigma_q,S)=0$ or $$i_r(\Gamma_q,S)\geq i_r(\Sigma_q,S)=1.$$ In the case $i_r(\Gamma_q,S)=i_r(\Sigma_q,S)$ then $\mathcal F$ is non-singular at $r$.
 \item For any $r\in S\cap\sing(\cl{F})$ we have that $\Gamma_q$, respectively $\Sigma_q$, is a $S$-polar generic curve for $\mathcal F$, respectively for $\mathcal G$. In particular, when $\mathcal F$ is non-dicritical at $r$ we have
     $$
     \Delta_r({\mathcal F},S)=i_r(\Gamma_q,S)-i_r(\Sigma_q,S)\geq 0,
     $$
in view of Corollary \ref{cor:positivity}. If $r$ is a nodal singularity of $S$, we have $i_r(\Sigma_q,S)=2$  (note that $S$ has two branches at $r$) and thus  $i_r(\Gamma_q,S)-i_r(\Sigma_q,S)\geq 0$.
 \end{enumerate}
For the second statement, let us note that up to choosing a generic $q$, the line joining $q$ and $r$, for $r\in S\setminus (L_{\infty}\cup \sing(\cl{F}))$ is either transversal to $S$ of tangent with intersection multiplicity equal to $2$. In the first case we have that $r\notin \Gamma_q\cup \Sigma_q$. Consider the second case. Let us choose affine coordinates $(x,y)$ centered at $r$ such that the lines passing through $q$ are the leaves of $dx=0$. Moreover, the curve $S$ is locally given at $r$ as $y^2+xg(x,y)=0$ where $g(x,y)\in {\mathbb C}\{x,y\}$ is such that $g(0,0)\ne 0$. Since $\mathcal F$ is non singular at $r$, it is locally given at $r$ by $\eta=0$ where
$$
\eta= d(U(x,y)(y^2+xg(x,y))), \quad U(0,0)\ne 0.
$$
Hence $\Gamma_q$ is given at $r$ by $h(x,y)=0$, for $dx\wedge\eta=h(x,y)dx\wedge dy$. That is
$$
h(x,y)= \frac{\partial (U(x,y)(y^2+xg(x,y)))}{\partial y}= 2yU(x,y)+y^2\frac{\partial U(x,y)}{\partial y}+x(\cdots).
$$
This implies that $i_r(\Gamma_q,S)=1$. Same argument to see that $i_r(\Sigma_q,S)=1$. These kind of computations also show the third statement.

Now, let us consider a generic $q$ as above. By B\'{e}zout's theorem applied to the curves $S$ and $\Sigma_q$, we have
$$
d^2=\sum_{r\in {\mathcal A}}i_r(\Sigma_q,S)+ \sum_{r\in {\mathcal B}}i_r(\Sigma_q,S)+ \sum_{r\in {\mathcal C}}i_r(\Sigma_q,S),
$$
where ${\mathcal A}=S\cap L_\infty $, ${\mathcal B}=S\setminus (L_\infty\cup\sing(\cl{F}))$ and
${\mathcal C}=S\cap\sing(\cl{F})$. We obtain that
$$
d(d-1)=\sum_{r\in {\mathcal B}}i_r(\Sigma_q,S)+ \sum_{r\in {\mathcal C}}i_r(\Sigma_q,S).
$$
By B\'{e}zout's theorem applied to the curves $S$ and $\Gamma_q$, we obtain
$$
d(\deg({\mathcal F})+1)=\sum_{r\in {\mathcal B}}i_r(\Gamma_q,S)+ \sum_{r\in {\mathcal C}}i_r(\Gamma_q,S).
$$
Taking the difference, we have
\begin{equation}
\label{eq:cotaspoincare}
\deg({\mathcal F})+2-d=\frac{1}{d}\sum_{r\in S\cap \text{Sing}({\mathcal F})}(i_r(\Gamma_q,S)-i_r(\Sigma_q,S))\geq 0.
\end{equation}
Note that in the non-dicritical case this equation reads as
\begin{equation}
\label{eq:cotaspoincare2}
\deg({\mathcal F})+2-d=\frac{1}{d}\sum_{r\in S\cap \text{Sing}({\mathcal F})}\Delta_r({\mathcal F},S).
\end{equation}

\begin{obs}
By Proposition \ref{gsv}, the number in the right part of Equation \eqref{eq:cotaspoincare}
is $(1/d)\sum_{p \in {\rm Sing}(\cl{F}) \cap S} GSV_p(\cl{F},S)$,
which is obtained  in \cite{brunella1997I} and \cite{brunella1997II} as
$c_1(N_\cl{F}) \cdot S - S \cdot S$, where $N_\cl{F} = \cl{O}(d + 2)$ is the normal bundle of $\cl{F}$.
\end{obs}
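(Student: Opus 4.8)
The plan is to check the two asserted equalities in turn, assembling results already at hand. For the first one --- that the right-hand side of Equation~\eqref{eq:cotaspoincare} equals $\frac{1}{d}\sum_{r}GSV_r(\cl{F},S)$, with $d=\deg(S)$ --- I would simply combine property~(4) of the list of generic properties of $q\in U$ above, which already records $\Delta_r(\cl{F},S)=i_r(\Gamma_q,S)-i_r(\Sigma_q,S)$ at each $r\in S\cap\sing(\cl{F})$ where $\cl{F}$ is non-dicritical, with Proposition~\ref{gsv}, which identifies $\Delta_r(\cl{F},S)$ with $GSV_r(\cl{F},S)$ (the germ of $S$ at $r$ being a union of convergent separatrices of $\cl{F}$). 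Summing over $r\in S\cap\sing(\cl{F})$ and dividing by $d$ then rewrites the right-hand side of Equation~\eqref{eq:cotaspoincare} as $\frac{1}{d}\sum_{r}GSV_r(\cl{F},S)$. This part is purely formal.

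For the second equality --- Brunella's identity $\sum_{r}GSV_r(\cl{F},S)=c_1(N_{\cl{F}})\cdot S-S\cdot S$ --- I would recall the line-bundle argument of \cite{brunella1997I,brunella1997II}. Invariance of $S$ produces a natural $\cl{O}_S$-linear morphism $N_S\to N_{\cl{F}}|_S$ from the normal sheaf of the curve to the restriction of the normal bundle of the foliation; in the local notation of the decomposition~\eqref{decomp} this morphism is multiplication by $k/g$, so its cokernel is a torsion sheaf supported on $\sing(\cl{F})\cap S$ of length ${\rm ord}_{t}\big((k/g)\circ\gamma\big)=GSV_r(\cl{F},S)$ at $r$ --- precisely the integer computed in the proof of Proposition~\ref{gsv}. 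Comparing degrees along $S$ gives $N_{\cl{F}}\cdot S=S\cdot S+\sum_{r}GSV_r(\cl{F},S)$, that is $\sum_{r}GSV_r(\cl{F},S)=c_1(N_{\cl{F}})\cdot S-S\cdot S$. It then remains to insert the value $N_{\cl{F}}=\cl{O}(\deg(\cl{F})+2)$ of the normal bundle of a degree-$\deg(\cl{F})$ foliation on $\pcn{2}$, read off from $0\to T_{\cl{F}}\to T_{\pcn{2}}\to N_{\cl{F}}\otimes\cl{I}_{\sing(\cl{F})}\to 0$ together with $c_1(T_{\pcn{2}})=3H$ and $c_1(T_{\cl{F}})=(1-\deg(\cl{F}))H$, and to use B\'ezout: $S\cdot S=d^2$ and $N_{\cl{F}}\cdot S=(\deg(\cl{F})+2)d$, so that $\frac{1}{d}\sum_{r}GSV_r(\cl{F},S)=\deg(\cl{F})+2-d$, which is the left-hand side of Equation~\eqref{eq:cotaspoincare}.

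The only delicate point lies inside the second step: one must be sure that the \emph{analytic} index of Definition~\ref{GSVdefinition}, given by $\frac{1}{2\pi i}\int_{\partial C}\frac{g}{k}\,d\!\left(\frac{k}{g}\right)$, really measures the length of the cokernel of $N_S\to N_{\cl{F}}|_S$, so that the global degree count on $S$ is legitimate. This reconciliation is essentially already carried out in the proof of Proposition~\ref{gsv}, where the index is shown to equal ${\rm ord}_{t}((k/g)\circ\gamma)$; what is left is the globalisation --- checking that these local orders add up to the difference of the degrees of the two line bundles restricted to $S$ --- together with the classical normal bundle computation for foliations of $\pcn{2}$ recalled above. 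No ingredient beyond these is needed.
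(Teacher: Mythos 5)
Your proposal is correct and matches the paper's intent: the paper justifies the first identification exactly as you do (property (4) of the generic choice of $q$ plus Proposition \ref{gsv}, i.e.\ Equation \eqref{eq:cotaspoincare2}), and for the second equality it gives no proof at all but merely cites \cite{brunella1997I,brunella1997II}, whose standard line-bundle argument (the section $k/g$ of $N_{\cl F}\otimes\cl{O}(-S)|_S$, zeros counted on the normalization, plus $N_{\cl F}=\cl{O}(\deg(\cl F)+2)$) you reconstruct faithfully, including the correct reading of the paper's slightly ambiguous ``$\cl{O}(d+2)$'' as $\cl{O}(\deg(\cl F)+2)$. The one presentational caveat is that for singular $S$ one should phrase the target of your morphism as $\cl{O}_{\pcn{2}}(S)|_S$ rather than the normal sheaf $N_S$, but this does not affect the degree count, which you correctly reduce to the branchwise orders ${\rm ord}_t((k/g)\circ\gamma)$ already computed in the proof of Proposition \ref{gsv}.
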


\subsection{Logarithmic foliations} The limit case for the bound of degrees is closely related with logarithmic forms, as shown in the following statement
\begin{teo}[\cite{cerveau1991}, \cite{brunella1997II}]
 \label{teo3}
 Take a curve $S\subset {\mathbb P}^2_{\mathbb C}$  given by a homogeneous polynomial equation
$
P=P_1P_2\cdots P_n=0
$,
 where each polynomial $P_i$ is irreducible of degree $d_i$. Suppose that $S$ is invariant for a foliation $\mathcal F$ of ${\mathbb P}^2_{\mathbb C}$ that is non-dicritical at each point  $q\in S$. The following statements are equivalent
\begin{enumerate}
\item $\deg (S)=\deg({\mathcal F})+2$.

\item There are residues $\lambda_i\in {\mathbb C}^*$ with $\sum_{i=1}^n\lambda_id_i=0$ such that $\mathcal F$ is given by $W=0$, where $W$ is the global closed logarithmic $1$-form in ${\mathbb P}^2_{\mathbb C}$ defined by
    $$
    W=\sum_{i=1}^n\lambda_i \frac{dP_i}{P_i}.
    $$
\item The foliation ${\mathcal F}$ is a generalized curve at any  $q\in S$ and $S$ contains all the separatrices of $\mathcal F$ at $q$.
\end{enumerate}
\end{teo}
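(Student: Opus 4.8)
The plan is to establish the equivalence $(1)\Leftrightarrow(3)$ by means of the polar excess, and then to obtain the equivalence with $(2)$ from the classical logarithmic arguments of \cite{cerveau1991,brunella1997II}; the new content is the first equivalence, the second being essentially classical.

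\emph{For $(1)\Leftrightarrow(3)$.} Since $\mathcal F$ is non-dicritical at every point of $S$, Equation~\eqref{eq:cotaspoincare2} is available, and together with the positivity $\Delta_r(\mathcal F,S)\geq 0$ of Corollary~\ref{cor:positivity} it shows that $\deg(S)=\deg(\mathcal F)+2$ holds if and only if $\Delta_r(\mathcal F,S)=0$ for every $r\in S\cap\sing(\mathcal F)$. I would then reduce matters to the local statement that, at a point $r$, one has $\Delta_r(\mathcal F,S)=0$ precisely when $\mathcal F$ is a generalized curve at $r$ and the germ of $S$ at $r$ equals $S_{\mathcal F}$ at $r$ — equivalently, $S$ contains every separatrix of $\mathcal F$ at $r$, since each branch of the invariant curve $S$ is a separatrix. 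To prove this, write $f=0$ for a reduced equation of the germ of $S$ at $r$; by the additivity of generic polar intersection numbers over branches (applied to both $\mathcal F$ and $\mathcal G_f$) together with Remark~\ref{remark1-prop2} one has $\Delta_r(\mathcal F,S)=\sum_{B\in\mathcal B(S)}\bigl(p_r(\mathcal F,B)-p_r(\mathcal G_f,B)\bigr)$, a sum of non-negative terms by Theorem~\ref{inequality}. Hence $\Delta_r(\mathcal F,S)=0$ forces $p_r(\mathcal F,B)=p_r(\mathcal G_f,B)$ for every branch $B$, and Theorem~\ref{inequality} then gives that $\mathcal F$ is of second type with $S=S_{\mathcal F}$ at $r$ and that every separatrix is of Briot and Bouquet type, which is precisely what it means for $\mathcal F$ to be a generalized curve (as recorded in the proof of Corollary~\ref{cor:generalizada}); the converse is Corollary~\ref{cor:generalizada} applied to the germ at $r$. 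At a point $q\in S\setminus\sing(\mathcal F)$ the conditions in $(3)$ are automatic, since $\mathcal F$ is a generalized curve there for trivial reasons and, being regular, has a unique local separatrix, which forces $S$ to be smooth at $q$ and equal to the local leaf. Thus $(1)$ is equivalent to the validity of the conditions in $(3)$ at every point of $S\cap\sing(\mathcal F)$, that is, to $(3)$.

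\emph{The implication $(2)\Rightarrow(3)$} is classical: if $\mathcal F$ is defined by $W=\sum_i\lambda_i\,dP_i/P_i$ with all residues $\lambda_i\neq 0$, then the transform of $W$ under any blow-up is again a closed logarithmic form, so in a reduction of singularities of $\mathcal F$ every singular point is, in suitable coordinates, of the linear type $A\,du/u+B\,dv/v$; the non-dicriticity hypothesis along $S$ excludes the cases producing saddle-nodes or dicritical components, so all these singularities are complex hyperbolic and $\mathcal F$ is a generalized curve at each $q\in S$. Moreover the separatrix set of $\mathcal F$ is the polar set of $W$, namely $S$, so $S$ carries all separatrices at every point.

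\emph{The remaining implication $(3)\Rightarrow(2)$} — equivalently, by the above, $(1)\Rightarrow(2)$ — is the hard one, and here I would argue as in \cite{cerveau1991,brunella1997II}. The goal is to show that $\mathcal F$ is defined by a global closed rational $1$-form with at most simple poles supported on $S$. At each $r\in S$, the hypothesis that $\mathcal F$ is a generalized curve whose separatrices lie among the branches of $S$ at $r$ yields a closed logarithmic germ $W_r$ with simple poles on $S$, non-zero residues, defining $\mathcal F$ near $r$; this germ is produced by following a reduction of singularities, using that complex hyperbolic singularities are given by closed logarithmic forms and that closed logarithmic forms descend under blow-down. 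On overlaps the $W_r$ differ by non-vanishing holomorphic units, being local models of the same foliation with the same polar divisor, so they patch to a global section of $\Omega^1_{\mathbb P^2}(\log S)\otimes\mathcal L$ for some line bundle $\mathcal L$; the identity $N_{\mathcal F}=\mathcal O(\deg(\mathcal F)+2)=\mathcal O(\deg(S))=\mathcal O(S)$, valid exactly in the limit case, forces $\mathcal L$ to be trivial. A global closed rational $1$-form on $\mathbb P^2_{\mathbb C}$ with simple poles on $S$ is necessarily of the form $W=\sum_i\lambda_i\,dP_i/P_i$; the relation $\sum_i\lambda_i d_i=0$ is the condition that $W$ be a well-defined $1$-form on $\mathbb P^2_{\mathbb C}$, i.e.\ that it be contracted to zero by the radial vector field; and each $\lambda_i\neq 0$ because $\lambda_i$ is the residue of the local $W_r$ along $\{P_i=0\}$, which is non-zero since $\{P_i=0\}$ is a genuine separatrix of $\mathcal F$ through a complex hyperbolic singularity. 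The main obstacle is precisely this global step: the patching of the local logarithmic models and the computation of the twisting line bundle rest on the index identities of \cite{brunella1997I,brunella1997II} and on the structure theory of generalized curves, and it is this input that we take from the literature.
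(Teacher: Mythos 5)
Your handling of $(1)\Leftrightarrow(3)$ is correct and coincides with the paper's: Equation~\eqref{eq:cotaspoincare2} together with Corollary~\ref{cor:positivity} reduces the limit case to the vanishing of every local polar excess $\Delta_r(\mathcal F,S)$, and the branch-by-branch use of Theorem~\ref{inequality} to identify that vanishing with ``$\mathcal F$ is a generalized curve at $r$ with $S=S_{\mathcal F}$ at $r$'' is exactly the right way to expand the paper's one-line remark that this equivalence follows from Corollary~\ref{cor:generalizada} and Equation~\eqref{eq:cotaspoincare2}; in particular you correctly dispose of the a priori possibility that the germ of $S$ at $r$ is strictly smaller than $S_{\mathcal F}$. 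Your added implication $(2)\Rightarrow(3)$ is more than is needed to close the cycle: $(2)\Rightarrow(1)$ is immediate, since $PW=\sum_i\lambda_i(P/P_i)\,dP_i$ has coefficients of degree $\deg(S)-1$ with no common factor when all $\lambda_i\neq 0$.

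The real divergence is in $(1)\Rightarrow(2)$, and there your route has a genuine soft spot. The paper, following \cite{cerveau2013}, observes that in the limit case the quotient $\Omega=T/P$ of a homogeneous $1$-form $T$ defining $\mathcal F$ (coefficients of degree $\deg(\mathcal F)+1$) by the degree-$(\deg(\mathcal F)+2)$ equation $P$ of $S$ is \emph{already} a globally defined meromorphic $1$-form on ${\mathbb P}^2_{\mathbb C}$; Propositions 2.1 and 2.2 of \cite{cerveau2013} show it is logarithmic along $S$ and remains so through the reduction of singularities, the residues are constant along each component, and $\Omega-W$ is then a global holomorphic $1$-form on ${\mathbb P}^2_{\mathbb C}$, hence zero. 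No patching is required. You instead follow the older Brunella--Mendes scheme of gluing local closed logarithmic germs $W_r$ into a section of $\Omega^1_{{\mathbb P}^2}(\log S)\otimes\mathcal L$. But germs that ``differ by non-vanishing holomorphic units'' do not glue to a \emph{closed} form --- multiplication by a unit destroys closedness --- so the closedness of the resulting global form is precisely the point where Deligne's theorem (quoted in the paper, and applicable only after arranging normal crossings) or the index computations of \cite{brunella1997I,brunella1997II} must be invoked; likewise the triviality of $\mathcal L$ is not free. You flag this as input from the literature, which is honest, but it means your proof of the key implication is an outline of Brunella's argument rather than a proof, whereas the paper's $T/P$ construction gives a short, essentially self-contained argument that deliberately avoids Deligne's statement. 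If you want to match the paper's level of completeness, adopt the $T/P$ construction for this step.
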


 The case of nodal singularities has been proved by D. Cerveau and A. Lins Neto  in \cite{cerveau1991} and it is also valid in a dicritical situation. Later, the result was extended by M. Brunella to the
non-dicritical case in (\cite{brunella1997II}, Proposition 10). His proof relies on computations involving indices of vector fields for a subsequent application of the following result of Deligne:
\begin{teorm}
[Deligne  \cite{deligne1971}]
 \label{teo:deligne}
 Let $\omega$ be a logarithmic 1-form on a projective variety $M$ whose polar   divisor has
normal crossings. Then $\omega$ is closed (\em and hence it defines a codimension one logarithmic foliation on $M$).
\end{teorm}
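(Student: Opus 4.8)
The plan is to reduce the statement to the single assertion that $d\omega=0$, and to split this into an elementary residue computation followed by one application of classical Hodge theory on $M$. Throughout I take $M$ smooth projective (hence compact Kähler), write $D=\bigcup_i D_i$ for the polar divisor, assumed to have normal crossings, and set $U=M\setminus D$ with open immersion $j\colon U\hookrightarrow M$. Since $\omega$ is logarithmic, $d\omega$ a priori lies in $H^0(M,\Omega^2_M(\log D))$; the goal is to show it vanishes.

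First I would show that $\eta:=d\omega$ is in fact a holomorphic $2$-form, i.e.\ has no poles along $D$. For each component $D_i$ the Poincar\'e residue $\mathrm{Res}_{D_i}\omega$ is a global section of $\mathcal{O}_{D_i}$; as $D_i$ is irreducible and projective, it is a constant $\lambda_i\in\mathbb{C}$. The residue map is a morphism of complexes, so it commutes with the exterior derivative up to sign, whence $\mathrm{Res}_{D_i}(d\omega)=-d(\mathrm{Res}_{D_i}\omega)=-d\lambda_i=0$. A logarithmic form whose residue along every component of a normal crossings divisor vanishes extends across $D$; therefore $\eta=d\omega\in H^0(M,\Omega^2_M)$. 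This is where the projectivity (compactness) of the $D_i$ enters decisively, forcing the residues to be constant.

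Next I would invoke Hodge theory on $M$ to pass from ``$\eta$ holomorphic'' to ``$\eta=0$''. The restriction $\eta|_U=d(\omega|_U)$ is the exterior derivative of a smooth global $1$-form on $U$, so the de Rham class $[\eta]\in H^2(M,\mathbb{C})$ lies in the kernel of $j^*\colon H^2(M,\mathbb{C})\to H^2(U,\mathbb{C})$. From the long exact sequence of cohomology with supports in $D$ together with the Thom isomorphism $H^2_D(M)\cong H^{\mathrm{BM}}_{2\dim M-2}(D)$, this kernel is the image of the Gysin map, hence spanned by the fundamental classes $[D_i]$, all of Hodge type $(1,1)$. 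Since $\eta$ is holomorphic, $[\eta]$ is of type $(2,0)$, and $H^{2,0}(M)\cap H^{1,1}(M)=0$ gives $[\eta]=0$. Finally, on a compact K\"ahler manifold a holomorphic form is harmonic and the harmonic representative of the zero class is the zero form, so $\eta=d\omega=0$, as desired.

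I expect the genuine obstacle to be precisely the last paragraph: the input that $\ker j^*$ is generated by algebraic $(1,1)$-classes and that holomorphic forms inject into cohomology, that is, the Hodge decomposition of $H^2(M)$ together with the $(1,1)$-type of divisor classes. Conceptually, the whole statement (in all degrees) is equivalent to the degeneration at $E_1$ of the Hodge--de Rham spectral sequence $E_1^{p,q}=H^q(M,\Omega^p_M(\log D))\Rightarrow H^{p+q}(U,\mathbb{C})$ of Deligne: degeneration forces the $d_1$-differential $H^0(M,\Omega^1_M(\log D))\to H^0(M,\Omega^2_M(\log D))$, which is exactly $\omega\mapsto d\omega$, to vanish. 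For the $1$-form case needed here the elementary residue-plus-Hodge argument above suffices and sidesteps the full spectral-sequence machinery; the deep content remains the $E_1$-degeneration, provable either analytically via mixed Hodge theory or algebraically, following Deligne--Illusie, by lifting Frobenius after reduction modulo $p$.
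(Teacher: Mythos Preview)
The paper does not prove this statement at all: it is quoted as a result of Deligne \cite{deligne1971} and merely cited for context. In fact the authors explicitly say that they will ``avoid the direct use of Deligne's statement'' in their proof that (1) implies (2) in Theorem~\ref{teo3}, replacing it by Cerveau's residue argument along a reduction of singularities. So there is no proof in the paper to compare your proposal against.

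That said, your sketch is essentially the standard Hodge-theoretic proof and is correct. The two steps --- (i) the residues $\mathrm{Res}_{D_i}\omega$ are constants because the $D_i$ are compact, hence $d\omega$ has vanishing residues and extends to a global holomorphic $2$-form $\eta$; (ii) $j^*[\eta]=0$ in $H^2(U)$ forces $[\eta]$ to lie in the span of the $(1,1)$-classes $[D_i]$, while $[\eta]\in H^{2,0}(M)$, so $[\eta]=0$ and by harmonicity $\eta=0$ --- are exactly the usual ones. Your closing remark that the general statement is equivalent to $E_1$-degeneration of the logarithmic Hodge--de Rham spectral sequence is also accurate and places the result in its proper context. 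One small point of hygiene: you silently assume $M$ smooth; Deligne's theorem is usually stated and used in that setting, so this is harmless, but worth making explicit.
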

Recently
D. Cerveau gave a very short and elegant proof of Theorem \ref{teo3} in \cite{cerveau2013}. This approach is based on the fact that a non-dicritical logarithmic meromorphic $1$-form $\omega$ (that is $\omega$ and $d\omega$ have at most simple poles) has also this property after a blow-up.

Concerning polar invariants, we can see as a consequence of Corollary \ref{cor:generalizada} and Equation~\eqref{eq:cotaspoincare2}  that the statements (1) and (3) of Theorem \ref{teo3} are equivalent.

Let us end these notes by providing a proof  that (1) implies (2) following the arguments in \cite{cerveau2013}, \cite{brunella1997II} and \cite{Bru-M}  of D. Cerveau, M. Brunella and L. G. Mendes, but avoiding the direct use of Deligne's statement.

Let us consider a homogeneous 1-form $$
T=\sum_{i=0}^2A_i(X_0,X_1,X_2)dX_i;\quad  \sum_{i=0}^2X_iA_i=0
$$
defining $\mathcal F$ (see \cite{Cano-Cerveau}), where $\deg{A_i}=\deg{\mathcal F}+1$ and consider the meromorphic 1-form $\Omega=T/P$. Since $\deg(P)=\deg{\mathcal F}+2$, we see that $\Omega$ defines a global meromorphic 1-form $\omega$ on ${\mathbb P}^2_{\mathbb C}$. Looking at each point $p\in S$, by Proposition 2.1 in \cite{cerveau2013}, the form $\omega$ is logarithmic at $p$. We apply the stability by non-dicritical blowing-up (Proposition 2.2 in \cite{cerveau2013}) of being logarithmic, to see that after a reduction of singularities
$$
\pi:\widetilde{\mathbb P}^2_{\mathbb C}\rightarrow {\mathbb P}^2_{\mathbb C}
$$
of $\mathcal F$ along $S$, we obtain a logarithmic 1-form $\pi^*\omega$ that has locally one of the following expressions  at a point $p$ in the total transform $\tilde{S}$ of $S$:
$$
U(x,y) (\lambda_p \frac{dx}{x} + b(x,y)dy); \quad  U(x,y)(\lambda_p \frac{dx}{x}+b(x,y) \frac{dy}{y})
$$
with $U(0,0)=1$, where $x=0$ is a selected irreducible component of the total transform $\tilde S$ of $S$. The first case corresponds to a non-singular point in $\tilde S$ and in the second one we have $S=(xy=0)$ (note that we can do the same argument for the component $y=0$). The functions $\lambda_p$ are holomorphic, hence they are constant. So we can attach a residue
$\lambda_i$ to each irreducible component $P_i=0$ of $S$. Moreover, taking a general line $\ell$ in ${\mathbb P}^2_{\mathbb C}$ avoiding the singular locus of $P$, the sum of residues  in $\omega\vert_\ell$ gives
$$
\sum_{i=1}^n\lambda_i\deg(P_1)=0.
$$
Now, we consider the global meromorphic 1-form
$$
    W=\sum_{i=1}^n\lambda_i \frac{dP_i}{P_i}.
$$
It follows that $\Omega-W$ is holomorphic (the residues coincide) and hence $\Omega=cW$ for a non-null constant $c\in {\mathbb C}$.

%
%


\medskip \medskip  \medskip
\noindent
Felipe Cano \\
Departamento de
\'Algebra, An\'alisis Matem\'atico,
Geometr\'\i a y Topolog\'\i a \\
Universidad de Valladolid\\
Paseo de Bel\'en 7,
47011 -- Valladolid, SPAIN  \\
fcano@agt.uva.es

\medskip \medskip
\noindent
Nuria Corral \\
Departamento de Matem\'aticas, Estad\'\i stica y Computaci\'on \\
Universidad de Cantabria \\
Avda. de los Castros s/n, 39005 -- Santander, SPAIN \\
nuria.corral@unican.es
  \medskip \medskip

\noindent
Rog\'erio  Mol  \\
Departamento de Matem\'atica \\
Universidade Federal de Minas Gerais \\
Av. Ant\^onio Carlos, 6627  \  C.P. 702  \\
30123-970  --
Belo Horizonte -- MG,
BRAZIL \\
rsmol@mat.ufmg.br

\end{document}